\documentclass[11pt,reqno]{amsart}
\usepackage{amsmath, amssymb, amsthm}

\usepackage{epsfig}
\usepackage{graphicx}
\usepackage{color}
\usepackage[vcentermath]{youngtab}
\usepackage{url}

\usepackage{wrapfig}
\usepackage{amssymb}
\usepackage[colorlinks=true,linkcolor=black,citecolor=black,urlcolor=blue]{hyperref}

  \theoremstyle{plain}
  \newtheorem{lem}{Lemma}
  \theoremstyle{plain}
  \newtheorem*{thm*}{Theorem}
  \theoremstyle{plain}
 \newtheorem*{conj*}{Conjecture}
\newtheorem*{ques*}{Question}

\theoremstyle{plain}
\newtheorem{theorem}{Theorem}

\theoremstyle{definition}
\newtheorem{definition}[theorem]{Definition}

\newtheorem{corollary}[theorem]{Corollary}

\DeclareMathOperator{\sign}{sign}

\DeclareMathOperator{\GL}{GL}

\DeclareMathOperator{\wt}{Wt}
\DeclareMathOperator{\id}{id}
\DeclareMathOperator{\tr}{tr}
\DeclareMathOperator{\sgn}{sgn}
\DeclareMathOperator{\str}{Star}
\DeclareMathOperator{\height}{height}

\newcommand{\RR}{{\mathbb R}}

\newcommand{\N}{\mathbb{N}}

\newcommand{\R}{\mathbb{R}}
\newcommand{\C}{\mathbb{C}}
\newcommand{\bS}{\mathbf{S}}
\newcommand{\bA}{\mathbf{A}}
\newcommand{\bB}{\mathbf{B}}
\newcommand{\sgnrep}{\mathbf{sgn}}

\usepackage{graphicx}
\usepackage{color}

\setlength\topmargin{0in}
\setlength\headheight{1in}
\setlength\headsep{0in}
\setlength\textheight{7.7in}
\setlength\textwidth{6.5in}
\setlength\oddsidemargin{0in}
\setlength\evensidemargin{0in}
\setlength\parindent{0.25in}
\addtolength{\headsep}{2cm}

\begin{document}

\title{Ordering the representations of $S_n$ using the interchange process}

\author{Gil Alon and Gady Kozma}

\address{Gady Kozma\\
Department of Mathematics,\\
Weizmann Institute of Science,\\
Rehovot 76100, Israel}
\email{gadyk@weizmann.ac.il}

\address{Gil Alon\\
Division of Mathematics,\\
The Open University of Israel,\\
1 University Road, Raanana 43107, Israel}
\email{gilal@openu.ac.il}

\begin{abstract}
Inspired by Aldous' conjecture for
the spectral gap of the interchange process and its recent
resolution by Caputo, Liggett and Richthammer \cite{CLR09}, we define
an associated order $\prec$ on the irreducible representations of $S_n$. Aldous'
conjecture is equivalent to certain representations being comparable
in this order, and hence determining the ``Aldous order'' completely is a
generalized question. We show a few additional entries for this order.
\end{abstract}

\keywords{Aldous' conjecture, interchange process, symmetric group,
  representations}

\subjclass[2000]{82C22 (60B15, 43A65, 20B30, 60J27, 60K35)}

\maketitle

\section{Aldous' order}
Let $G$ be a finite graph with vertex set $\{1,\dotsc,n\}$, and equip each edge $\{i,j\}$ with an alarm clock
that rings with exponential rate $a_{i,j}$. Put a marble in every
vertex of $G$, all different, and whenever the clock of $\{i,j\}$ rings,
exchange the two marbles. Each marble therefore does a standard
continuous-time random walk on the graph but the different walks are
dependent. This process is called {\em the interchange process} and is
one of the standard examples of an interacting particle system,
related to exclusion processes (where the marbles have only a few
possible colors) but typically more complicated. Further, when one
considers the evolution of the permutation taking the initial
positions of the marbles to their positions at time $t$, one gets a
continuous-time random walk on a (weighted) \emph{Cayley graph} of the
group of permutations $S_n$.

The first landmark in the understanding of this process was the work
of Diaconis \& Shahshahani \cite{DS81}. For the case of $G$ being the
complete graph they diagonalized the relevant $n!\times n!$ matrix
completely using representation theory and achieved very fine results
on the mixing properties.

If one cannot get the whole spectrum, the second eigenvalue (so-called
the spectral gap) allows to get significant partial information on the
process. In 1992 Aldous made the bold conjecture that the spectral gap
of the interchange process is in fact equal to the spectral gap of the
simple random walk on $G$, for every $G$\label{pg:aldous}. This was
the focus of much 
research \cite{B94,HJ96,M08,SC08,C09a,D09} and was finally resolved by
Caputo, Liggett and Richthammer \cite{CLR09}. Our focus in this paper
is however the spectrum as a whole, and for this we need to discuss
the problem from a representation theoretical point of view. More details
on representation theory will be given below in \S\ref{sec:rep}, for
now we continue assuming the reader has basic familiarity with the
subject.  

Let $n\in\N$ and let
$A=\left\{a_{i,j}\right\}_{1\le i<j\le n}$ with all $a_{i,j}$
non-negative. Examine the following {\em formal sum of permutations with real
  coefficients}\footnote{Or element of the group ring $\RR[S_n]$, if you prefer}
\[
\Delta_A=\sum_{i<j}a_{i,j}(\id-(ij))
\]
where id stands for the identity permutation. Let $\rho$ be
any representation of $S_n$. Then 
\begin{equation}\label{eq:defrhoDelta}
\rho(\Delta_A)=\sum_{i<j}a_{i,j}\big(\rho(\id)-\rho((ij))\big)
\end{equation}
is some $\dim\rho\times\dim\rho$ matrix. It is well known that
$\rho(\Delta_A)$ is a positive-semidefinite matrix, indeed $(ij)$ is an
involution so all the eigenvalues of $\rho((ij))$ are $\pm 1$ and the
eigenvalues of $\rho(\id-(ij))$ are in $\{0,2\}$, so each term in
(\ref{eq:defrhoDelta}) is positive and hence so is their sum. We shall
denote the eigenvalues of $\rho(\Delta_A)$ by
\[
\lambda_1(A;\rho)\le\dotsb\le\lambda_{\dim(\rho)}(A;\rho)
\]
We will occasionally drop the $A$ from the notation. 

Now, the irreducible representations of $S_n$ are indexed
by partitions of $n$. Namely, for each integer sequence $r_1\ge r_2\ge 
\dotsb\ge r_k>0$ with $\sum_{i=1}^k r_i=n$ there exists a unique
irreducible representation, which we shall denote by
$[r_1,\dotsc,r_k]$. Such a partition has a nice graphical
representation given by the associated \emph{Young diagram} obtained
by drawing each $r_i$ as a line of boxes from top to bottom, 
\[
[5,1]={\tiny\yng(5,1)}\qquad [3,2,1]={\tiny\yng(3,2,1)} \qquad [2,1^3]={\tiny\yng(2,1,1,1)}
\]
and we will occasionally use it. We may now state Aldous' conjecture
\begin{thm*}[Caputo, Liggett \& Richthammer \cite{CLR09}] For any $A$
  and any irreducible $\rho$ different from the trivial representation $[n]$,
\begin{equation}\label{eq:Aldous}
\lambda_1(A;[n-1,1])\le\lambda_1(A;\rho).
\end{equation}
\end{thm*}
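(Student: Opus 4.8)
The plan is to reduce the statement to the case where $G$ is a tree, and then to prove the tree case directly by an explicit combinatorial/linear-algebraic argument. The reduction to trees goes as follows. First I would observe that the inequality \eqref{eq:Aldous} is preserved under certain natural operations on the weight system $A$: in particular, one may restrict attention to connected graphs (on a disconnected graph both sides involve a direct-sum decomposition and the problem factors through the components), and one may try to "peel off" edges. Concretely, one hopes for a monotonicity statement of the form: if $A'$ is obtained from $A$ by modifying the weights along a cycle — decreasing the weight on one edge of the cycle while compensating on the other edges — then the relevant spectral quantities move in a controlled direction. Iterating such a move should let one delete edges from any cycle until only a spanning tree remains, without increasing $\lambda_1(A;\rho)$ relative to $\lambda_1(A;[n-1,1])$. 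The cleanest way to organize this is probably an \emph{induction on the number of edges}: the base case is a tree on $n$ vertices (which has exactly $n-1$ edges), and the inductive step removes one edge from a cycle.

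For the tree case, the strategy is to use the special structure of trees to get one's hands on the eigenvector of $\rho(\Delta_A)$ achieving $\lambda_1$. On a tree there is a canonical way to associate to each edge a cut of the vertex set into two pieces, and hence a natural family of "test vectors" built from the combinatorics of the tree. The key point to establish is an \emph{octopus-type inequality}: an operator inequality in $\RR[S_n]$ comparing $\Delta_A$ (for $A$ supported on the tree) with a weighted version of $\Delta$ on a "star" or "path" where the comparison edge $(ij)$ plays a distinguished role, with coefficients arising from effective resistances / random-walk quantities on the tree. Once such an inequality is in hand at the level of the group ring — i.e.\ before applying any representation — it passes immediately to every $\rho$, and evaluating it on the representation $[n-1,1]$ (which is the standard $(n-1)$-dimensional representation, and on which $\Delta_A$ acts essentially as the graph Laplacian of $G$) identifies the right-hand side of the comparison with $\lambda_1(A;[n-1,1])$.

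Putting these together: the abstract inequality $\Delta_A \succeq c\cdot(\text{something whose smallest nontrivial eigenvalue is }\lambda_1(A;[n-1,1]))$ in $\RR[S_n]$, applied to an arbitrary irreducible $\rho\neq[n]$, yields $\lambda_1(A;\rho)\ge\lambda_1(A;[n-1,1])$, because the trivial summand is exactly the kernel and is absent from $\rho\ne[n]$. The scalar $c$ must work out to $1$ (or be absorbed), which is where the precise choice of random-walk weights in the octopus inequality matters.

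The step I expect to be the main obstacle is proving the octopus inequality itself, i.e.\ finding the correct coefficients and verifying the resulting operator inequality in the group ring. This is a genuinely nonobvious positivity statement: one must exhibit $\Delta_A - c\,\Delta_{A'}$ as a nonnegative combination of elements of the form $\id - \sigma$ for well-chosen $\sigma\in S_n$ (conjugates of transpositions, or longer elements), and the combinatorics of which $\sigma$'s to use, and with what multiplicities, is delicate and depends on the global structure of the tree (through effective resistances). Everything else — the reduction to trees by edge-deletion monotonicity, and the translation on $[n-1,1]$ to the graph Laplacian — is comparatively routine, though the monotonicity step still requires a careful argument that the edge-removal does not hurt the inequality.
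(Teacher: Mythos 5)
This theorem is not proved in the paper at all: it is quoted as the main result of Caputo, Liggett and Richthammer \cite{CLR09}, so the only fair comparison is with their argument. Measured against that, your proposal is a plan rather than a proof, and the plan has two genuine gaps. The first and decisive one is that the octopus inequality --- the operator inequality in $\RR[S_n]$ exhibiting $\Delta_A$ as dominating a suitable comparison operator via a nonnegative combination of elements $\id-\sigma$ --- is exactly the content of the theorem. You name it, correctly identify it as ``the main obstacle,'' and do not prove it or even specify the coefficients; in \cite{CLR09} this positivity statement occupies the bulk of the paper and its verification is a delicate algebraic computation (ultimately a positive-semidefiniteness check in the group ring), not something that can be absorbed into a sentence about effective resistances. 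Without it, nothing in your outline yields \eqref{eq:Aldous}.

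The second gap is the reduction step. You propose to delete edges from cycles, with a monotonicity claim that removing an edge ``does not hurt the inequality,'' until a spanning tree remains. No such monotonicity is established, and it is not clear one exists: removing or reweighting an edge lowers both $\lambda_1(A;\rho)$ and $\lambda_1(A;[n-1,1])$, and there is no a priori control on which side drops more, so the inequality for the tree does not transfer back to the original graph by any argument you give. This is also not how \cite{CLR09} proceed: the tree case was already known (Handjani--Jungreis \cite{HJ96}), and the new ingredient of their proof is an induction on the number of \emph{vertices} for general weighted graphs, in which a vertex is removed and its edge weights are redistributed among its neighbours (a network-reduction step), with the octopus inequality providing precisely the comparison needed to make that step work. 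In other words, the part of your plan you label ``comparatively routine'' is the part that resisted proof for seventeen years, and the part you defer is the part that constitutes the proof.
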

\noindent The formulation of the result in \cite{CLR09} does not use
representation theory. As discussed on page \pageref{pg:aldous}, they
showed that the interchange process has the same spectral gap as
the simple random walk. See \cite{C09a} for how to get from one
formulation to the other. 

Faced with (\ref{eq:Aldous}) one is
tempted to generalize the question. Define the {\bf Aldous order} on
irreducible representations by
\[
\rho\preceq\sigma \iff \forall A\; \lambda_1(A;\rho)\ge\lambda_1(A;\sigma)
\]
where again by $\forall A$ we mean for all $n\times n$ matrices with
non-negative coefficients. It is rather unfortunate that the
\emph{largest} representation in the $\prec$ order has the
\emph{smallest} $\lambda_1$, but we wish to make $\prec$ consistent
with the \emph{domination order}, for which there is already an established
direction. We say that $\rho\prec\sigma$ if $\rho\preceq\sigma$ and
they are different, and remark that $\rho\preceq\sigma$ and
$\rho\succeq\sigma$ imply that $\rho=\sigma$, see page
\pageref{pg:precsucceq}, remark 1.

As can be seen in figure \ref{fig:ao}, $\prec$ is an interesting object,
and it seems to be correlated with the domination order
$\vartriangleleft$. We say that $\sigma\vartriangleleft\rho$
if $\sigma$ can be obtained from $\rho$ by a sequence of steps, such that
in each step one box is dropped to the row below it, in a way that
leaves a Young diagram. Cesi \cite{C09a} remarked that it would be
nice if Aldous' order were identical to the domination order, but also
noted a counterexample in $n=4$: one has
${\tiny\yng(2,2)}\vartriangleright{\tiny\yng(2,1,1)}$ but
${\tiny\yng(2,2)}\nsucc{\tiny\yng(2,1,1)}$. See
\cite[Counterexample 8.2]{C09a}. Such a counterexample exists for
every $n\ge 4$: we will see below in corollary \ref{cor:asympval},
page \pageref{cor:asympval}, that $[2,2,1^{n-4}]\nsucc
[2,1^{n-2}]$, although clearly $[2,2,1^{n-4}] \vartriangleright [2,1^{n-2}]$. The graph demonstrating this is a star. Shannon Starr
discovered numerically another asymptotic family of counterexamples
for even sizes, $[n+1,n-1]\nsucc[n,n]$ (private communication, checked
numerically up to size 14, $n=7$). In this case the graph
demonstrating it is the cycle. We remark also that the results of
Diaconis \& Shahshahani \cite{DS81} imply that if $\sigma\vartriangleleft\tau$
then $\sigma\nsucc\tau$. We do not know if it is generally true that
if two representations are $\vartriangleleft$-incomparable then they are
also $\prec$-incomparable, so we cannot quite state that $\prec$ is a
sub-order of $\vartriangleleft$, though it is a very natural conjecture.
\begin{figure}
\includegraphics{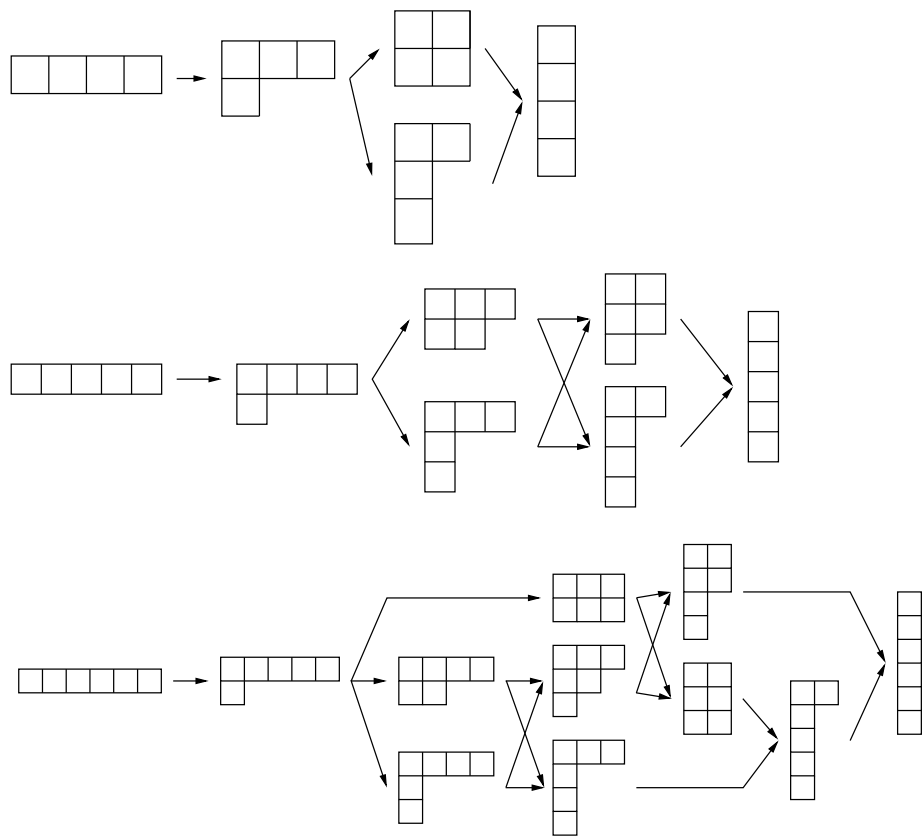}
\caption{\label{fig:ao}The Aldous order for $n=4,5,6$ Arrows are drawn
  from the larger representations to the smaller ones. For $n=4$
  everything is proved, but the others are results of computer
  simulations. What can be trusted in the diagrams are the {\em
    non-arrows} --- if the arrows do not imply a relationship between
  two diagrams that means a computer search found two examples proving
  no relationship may exist.}
\end{figure}

Despite the similarity with the completely explicit
$\vartriangleleft$, it is not easy to prove any entry in the Aldous
order. In fact, the only entries in the Aldous order
known previous to this paper are $[n]\succeq\rho\succeq [1,\dotsc,1]$
for all $\rho$ (this is easy once one identifies these
representations, see corollary \ref{cor:n1n}, page \pageref{cor:n1n}), a result of Bacher \cite{B94} that the
hook-shaped diagrams are ordered among themselves
\[
[n]\succ [n-1,1]\succ [n-2,1^2]\succ \dotsb \succ [2,1^{n-1}]
\succ [1^n],
\]
(we fill some details about this in the appendix, page
\pageref{sec:gamma}) and of course the Caputo et al.\ result,
$[n-1,1]\succ\rho$ for all $\rho\ne [n],[n-1,1]$. 

We may now state our result.
\begin{thm*}\label{thm:ao}
Let $n\ge 4k^2+4k$. Let $\tau$ be an irreducible representation whose Young
diagram has $\ge n-k$ boxes in the first row; and let $\sigma$ be an
irreducible 
representation whose Young diagram has $\ge n-k$ boxes at the 
the leftmost column. Then $\tau\succ\sigma$.
\end{thm*}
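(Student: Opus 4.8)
The plan is to reduce the statement to two things: an explicit description of $\Delta_A$ on hook representations, and a lemma asserting that, among diagrams with few boxes off the first row, the ``extreme'' hook is the largest for $\prec$. Throughout, write $\lambda_{\max}(A;\rho)$ for the top eigenvalue $\lambda_{\dim\rho}(A;\rho)$.

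\textbf{Setting up via a $\sgn$-twist.} For any representation $\rho$, $\rho\otimes\sgn$ is indexed by the conjugate diagram, and since $\sgn((ij))=-1$ we have $(\rho\otimes\sgn)(\Delta_A)=\sum_{i<j}a_{i,j}\big(\rho(\id)+\rho((ij))\big)$, so $\rho(\Delta_A)+(\rho\otimes\sgn)(\Delta_A)=2s(A)\cdot\id$ with $s(A):=\sum_{i<j}a_{i,j}$. Hence $\lambda_1(A;\sigma)=2s(A)-\lambda_{\max}(A;\sigma')$, where $\sigma'$ is the conjugate of $\sigma$ and so has $\ge n-k$ boxes in its \emph{first row}. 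Let $\mu_1\le\dots\le\mu_{n-1}$ be the eigenvalues of $[n-1,1](\Delta_A)$ — equivalently the nonzero eigenvalues of the weighted graph Laplacian, since $\C^n=[n]\oplus[n-1,1]$ and comparing traces gives $\mu_1+\dots+\mu_{n-1}=2s(A)$. Then $\tau\succ\sigma$ will follow from $\tau\ne\sigma$ together with the inequality, valid for all $A$,
\[
\lambda_1(A;\tau)+\lambda_{\max}(A;\sigma')\ \le\ \mu_1+\dots+\mu_{n-1},
\]
and now $\tau$ and $\sigma'$ \emph{both} have $\le k$ boxes outside the first row.

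\textbf{Hook representations.} Realize $[n-1,1]=\{x\in\C^n:\sum x_i=0\}$ with $(ij)$ permuting coordinates; there $(ij)$ is an orthogonal reflection, its $(-1)$-eigenspace being the line $\C(e_i-e_j)$. Because that eigenspace is a line, a wedge of distinct $(ij)$-eigenvectors meets it at most once, and a direct computation in this basis shows that on $\bigwedge^a[n-1,1]$ the operator $\id-(ij)$ equals the derivation $\sum_{p=1}^a\id\otimes\cdots\otimes(\id-(ij))\otimes\cdots\otimes\id$. Summing over edges, $\Delta_A$ acts on $\bigwedge^a[n-1,1]$ as $\sum_p\Delta_A^{(p)}$; since $\bigwedge^a[n-1,1]\cong[n-a,1^a]$, the spectrum of $[n-a,1^a](\Delta_A)$ is exactly the $a$-fold subset sums $\mu_{i_1}+\dots+\mu_{i_a}$ with $1\le i_1<\dots<i_a\le n-1$. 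In particular $\lambda_1(A;[n-a,1^a])=\mu_1+\dots+\mu_a$ and $\lambda_{\max}(A;[n-a,1^a])=\mu_{n-a}+\dots+\mu_{n-1}$. (Since $\mu_1+\dots+\mu_a$ is nondecreasing in $a$, this also recovers Bacher's ordering of the hooks; the same numbers fall out of the identity that the eigenvalues of $\sum_{j\ne v}(vj)$ on an irreducible $\rho$ are the contents of the removable boxes of $\rho$, provable from $\sum_{j\ne v}(vj)=\sum_{i<j}(ij)-\sum_{i<j,\,i,j\ne v}(ij)$ together with the fact that the Jucys--Murphy sums act by the total content.)

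\textbf{The key lemma — where $n\ge 4k^2+4k$ is used.} I would prove that for every representation $\tau$ with $\le k$ boxes outside its first row,
\[
\lambda_1(A;\tau)\le\mu_1+\dots+\mu_k\qquad\text{and}\qquad\lambda_{\max}(A;\tau)\le\mu_{n-k}+\dots+\mu_{n-1};
\]
i.e.\ $[n-k,1^k]\succeq\tau$ for all such $\tau$. For the first bound: such a $\tau$ is a constituent of the $k$-particle permutation module $P_k$ (the span of ordered $k$-tuples of distinct points — that $\tau$ appears is a dominance check), and $P_k\subseteq(\C^n)^{\otimes k}$ with the diagonal $S_n$-action, where each $(ij)$ is again a reflection on every factor; the reflection computation then gives $\langle\Delta_A v,v\rangle\le\langle D v,v\rangle$ for all $v\in(\C^n)^{\otimes k}$, with $D=\sum_p\Delta_A^{(p)}$ the $k$-independent-walkers operator. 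Hence $\lambda_1(A;\tau)\le\min_{0\ne v\in\tau}\langle Dv,v\rangle/\langle v,v\rangle$, and it suffices to produce a vector of $\tau$ whose $D$-Rayleigh quotient is $\le\mu_1+\dots+\mu_k$: the idea is to take a suitable combination of the $k$ lowest Laplacian modes, project it into the distinct-tuples subspace $P_k$, and project onto the $\tau$-isotypic component, the hypothesis $n\ge 4k^2+4k$ being exactly what makes $\dim\tau$ large enough, against the few ``high'' eigenspaces of $D$, for this projection to be nonzero with the claimed Rayleigh quotient. The second bound follows from the first after a $\sgn$-twist. This lemma is the main obstacle: the bound has to be uniform over \emph{all} weightings $A$, and weightings with a very spread-out Laplacian spectrum (e.g.\ a single heavily weighted edge) are what make the choice of test vector delicate — the removable-box identity above is one tool for pinning down the ``star-like'' contribution of a general $A$.

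\textbf{Conclusion.} Applying the key lemma to $\tau$ and to $\sigma'$ and adding,
\[
\lambda_1(A;\tau)+\lambda_{\max}(A;\sigma')\le(\mu_1+\dots+\mu_k)+(\mu_{n-k}+\dots+\mu_{n-1})\le\mu_1+\dots+\mu_{n-1},
\]
because $\{1,\dots,k\}$ and $\{n-k,\dots,n-1\}$ are disjoint subsets of $\{1,\dots,n-1\}$ once $n>2k$, so the two bracketed sums omit $\mu_{k+1},\dots,\mu_{n-1-k}\ge0$. Thus $\lambda_1(A;\tau)\le\lambda_1(A;\sigma)$ for every $A$, i.e.\ $\sigma\preceq\tau$; and $\tau\ne\sigma$ since their first rows have lengths $\ge n-k$ and $\le k+1$, which differ as $n\ge 4k^2+4k>2k+1$. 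Hence $\tau\succ\sigma$. (When $G$ is connected all $\mu_i>0$ and the omitted middle sum is nonempty, so the inequality is in fact strict.)
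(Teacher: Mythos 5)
Your reduction skeleton is fine (the duality $\lambda_1(A;\sigma)=2\wt(A)-\lambda_{\max}(A;\sigma')$ and the trace identity $\sum_i\mu_i=2\wt(A)$ are exactly the paper's equation (\ref{eq:dual}) plus the observation that it suffices to bound $\lambda_1(A;\tau)+\lambda_{\max}(A;\sigma')$ by $2\wt(A)$), and your wedge-power description of the hooks is correct. But the proof is not complete: everything has been pushed into the ``key lemma'' --- that every $\tau$ with at most $k$ boxes off the first row satisfies $\lambda_1(A;\tau)\le\mu_1+\dots+\mu_k$ and $\lambda_{\max}(A;\tau)\le\mu_{n-k}+\dots+\mu_{n-1}$ uniformly in $A$ --- and for that lemma you offer only a plan (``take a suitable combination of the $k$ lowest Laplacian modes, project into $P_k$, project onto the $\tau$-isotypic component''), which you yourself flag as the main obstacle. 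That lemma is at least as strong as the theorem: its first half asserts $\tau\succeq[n-k,1^k]$ for \emph{every} $\tau\in\mathcal{S}_k$, a comparability of the same nature as the open question the paper poses about $[n-2,1^2]$, and nothing in your sketch controls the projection uniformly in $A$ (for the complete graph, for instance, \emph{all} nonzero Laplacian eigenvalues are ``high'', so a dimension count against ``the few high eigenspaces of $D$'' cannot be the mechanism). There is also a concrete error in the reduction itself: the second bound does \emph{not} follow from the first by a $\sgn$-twist. Twisting converts $\lambda_{\max}(A;\tau)\le\mu_{n-k}+\dots+\mu_{n-1}$ into the statement $\lambda_1(A;\tau\otimes\sgnrep)\ge\mu_1+\dots+\mu_{n-1-k}$, i.e.\ a \emph{lower} bound on $\lambda_1$ for a diagram with a long first \emph{column}, to which your first bound (an upper bound for long first \emph{row}) simply does not apply. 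So both halves of the key lemma remain unproved, and they are where all the difficulty lies.

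For comparison, the paper never proves any statement as strong as your key lemma; it avoids the need for a uniform-in-$A$ spectral comparison with the hook by a reduction on the graph side. A matching $H$ of $2k$ disjoint edges is shown (via the $L^2(Q)$ embedding, lemma \ref{lem:Q}) to satisfy $\lambda_{\max}(H;\rho)\le 2k$ for $\rho\in\mathcal{S}_k$, hence $H$ is a ``reducing graph'' (lemma \ref{reduce}) and one may assume $A$ contains no $2k$ disjoint edges; such an $A$ is a union of $4k-2$ weighted stars, whose $\lambda_{\max}$ on $\mathcal{S}_k$-representations is controlled explicitly through the Gelfand--Tsetlin/Jucys--Murphy eigenvalues (lemmas \ref{lem9}, \ref{lem:onestar}, \ref{lem:weightedstar}); finally $\lambda_1(A;\sigma)$ is bounded by testing against an $H_q$-invariant vector in $L^2(Q)$ supported away from the stars' heavy edges, which is where $n\ge 4k^2+4k$ enters. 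If you want to salvage your route, you would need an actual proof of both inequalities of your key lemma for arbitrary weightings $A$; as it stands, the argument is a reduction of the theorem to an unproved and apparently harder statement.
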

\noindent Again we see the relationship with $\vartriangleright$. What we show
is that if ``$\tau \vartriangleright\!\!> \sigma$'' i.e.\ if $\tau$
is much larger in the domination order than $\sigma$, then
$\tau\succ\sigma$.

Let us end this introduction returning to the work of Diaconis \&
Shahshahani and to the mixing time of Markov chains. Better
understanding of Aldous' order will allow to extend their results to
other graphs. Of particular interest are the hook-shaped
representations because for them all eigenvalues are explicitly known
\cite{B94}. Determining which representations are $\prec$ than a given
hook-shaped representation will allow to quickly estimate their
contribution to the mixing of the process. Let us formulate some modest
questions
\begin{ques*}
Describe the representations $\sigma \prec[n-2,1^2]$. The natural
generalization of Aldous conjecture is that $\sigma
\vartriangleleft [n-2,1^2]$ implies $\sigma\prec[n-2,1^2]$. Is this
true? If not, maybe there exists some absolute constant $K$ such that any
$\sigma\vartriangleleft [n-K,K-1,1]$ satisfies $\sigma\prec [n-2,1^2]$?
\end{ques*}
There are of course many other natural questions about this order. How
many entries does it have? What is the longest chain? Is it really a
subset of the domination order? What
is the longest chain in the domination order which is completely
uncomparable in Aldous order? etc. The simulation results seem to
indicate that $\sigma\succ\rho\;\triangleright\;\tau$ implies
$\sigma\succ\tau$. We see no particular reason for this to be true,
but if it is it would be interesting. We chose to highlight the question
above because we believe it has relevance to questions which do not
need representation theory to state, e.g.~mixing time and the
quantum Heisenberg ferromagnet (see \cite{AK10} for the latter).

\section{Some representation theory}\label{sec:rep}
This section will contain only the minimal set of facts needed for the
paper. For a thorough introduction to the topic see one of the books
\cite{FH91,G93,JK81,S00} and the influential paper \cite{OV96}.
A representation of $S_n$ is a group homomorphism $\rho:S_n\to \GL(V)$ where
$V$ is some linear space over $\C$ and $\GL(V)$ is the space of all
linear transformations of $V$. We will assume throughout that $V$ is
finite dimensional. It can be assumed \cite[Theorem 1.5.3]{S00} that
$\rho(g)$ is a unitary matrix, and so we will assume this a-priori for
all our representations. We denote $\dim\rho=\dim V$.

Given two representations $\rho_i:S_n\to\GL(V_i)$, $i=1,2$ one may
construct their direct sum $\rho_1\oplus\rho_2:S_n\to\GL(V_1\oplus
V_2)$ by 
\[
(\rho_1\oplus\rho_2)(g)=\left(\begin{array}{cc}
\rho_1(g) & 0\\
0 & \rho_2(g)
\end{array}\right).
\]
On the other hand, if there is a decomposition $V=V_1\oplus V_2$ such that
for any $g\in S_n$, $\rho(V_i)\subset V_i$ for $i=1,2$ then one may
construct $\rho_i(g)=\rho(g)|_{V_i}$ and get
$\rho\cong\rho_1\oplus\rho_2$. If no such decomposition exists we say
that $\rho$ is irreducible. Every representation can be written as a
direct sum of irreducible representations, and the isomorphism classes of
the factors are unique up to order
\cite[Proposition 1.7.10]{S00}. Recall also Schur's lemma which states
that a linear map from an irreducible $V$ to $V$ which commutes with
the action of every $g\in S_n$ is a constant multiple of the identity
\cite[Corollary 1.6.8]{S00}.

\subsection{Young diagrams}
We will use one specific method that constructs all the irreducible
representations as explicit subspaces of the group ring $R$. The
construction is somewhat abstract, but we will only need a few
properties which will be easy to deduce, and we will do so in lemma
\ref{lem:Q} and (\ref{eq:reverse}) below and forget about the actual
definition of the representations. 

Recall that the group ring $\RR[S_n]$ is simply the collection of all formal
sums $\sum_{g\in S_n} a_g g$ with coefficients $a_g\in\RR$. We will
denote $R=\RR[S_n]$. $S_n$ acts
on $R$ by 
\[
h\left(\sum a_g g\right)=\sum a_g hg
\]
which makes $R$ into a (left) representation known as the 
\emph{regular representation}. It is true generally for any finite group
that any irreducible representation can be embedded into the regular
representation \cite[Proposition 1.10.1]{S00}. For a general finite
group this requires to work over $\C$, but as we will see shortly, the
specific structure of the representation of $S_n$ allows to work over
$\RR$, which is more natural in our setting.

Let now $\tau_1\ge\tau_2\ge\dotsb\ge\tau_m$ with $\sum\tau_i=n$. We define $H$ to be the
group of permutations $h$ that preserve the rows of the diagram
$[\tau_1\dotsc,\tau_m]$ in the sense that 
\begin{equation}\label{eq:DefH}
i\in [1,\tau_1] \Rightarrow h(i)\in[1,\tau_1];\quad
i\in [\tau_1+1,\tau_1+\tau_2] \Rightarrow
h(i)\in[\tau_1+1,\tau_1+\tau_2]
\quad \mbox{etc.}
\end{equation}
Let $V$ be the group of permutations that preserve the columns of the
diagram $[\tau_1,\dotsc,\tau_m]$ e.g.~any $v\in V$ must preserve the
set $\{1,\tau_1+1,\tau_1+\tau_2+1,\dotsc,n-\tau_m+1\}$. We define the following
elements of the group ring $R$,
\[
a_\tau=\sum_{h\in H}h \quad b_\tau=\sum_{v\in V}\sign(v)v\quad
c_\tau=a_\tau b_\tau\,.
\]
Then the representation $[\tau_1,\dotsc,\tau_m]$ is
defined to be $Rc_\tau=\{rc_\tau:r\in R\}$, with the group acting by
multiplication from the left. This is a subspace of $R$
which is easily seen to be closed under the action of $S_n$. By
\cite[Theorem 4.3]{FH91} these representations are
irreducible and exhaust all the irreducible representations of $S_n$.

To shed a little light on the definition let us take two
examples. The first is $[n]$. In this case $H=S_n$ and
$V=\{\id\}$. Hence $c_\tau=\sum_{g\in S_n}g$ so $hc_\tau=c_\tau$ for
any $h\in S_n$. This means that $[n]$ is one-dimensional with a
trivial action of $S_n$. This representation is also known as the
\emph{trivial representation}. A second example is $[1^n]$. In
this case $H=\{\id\}$ and $V=S_n$ so this time $c_\tau=\sum_{g\in
  S_n}\sign(g)g$. We get that $hc_\tau=\sign(h)c_\tau$ so this
representation is also one-dimensional, but this time the action of
$S_n$ is by multiplication with the sign of the permutation, the
so-called \emph{sign representation}, which we will denote by
$\sgnrep$. In general, if $\tau$ is any Young diagram and if $\tau'$ is
the diagram one gets by reflecting $\tau$ along the main diagonal (so
that the lengths of the rows of $\tau$ become the lengths of the columns
of $\tau'$) then
\begin{equation}\label{eq:reverse}
[\tau']=[\tau]\otimes\sgnrep\,.
\end{equation}
See \cite[2.1.8]{JK81}.

Now, $Ra_\tau$ is also a representation. Generally it is reducible,
but it is much more convenient to work with. Indeed, $ga_\tau$ is
simply $\sum_{h\in gH}h$ so $Ra_\tau$ is isomorphic to the natural
action of $S_n$ on the set of cosets $\{gH:g\in S_n/H\}$. Further,
each coset can be thought of as a coloring of $n$ by $m$ colors with
exactly $\tau_1$ numbers colored in the first color, exactly $\tau_2$
numbers colored in the second color, etc. Formally we define
\[
Q=Q(\tau)=\{q:\{1,\dotsc,n\}\to\{1,\dotsc,m\}:\#q^{-1}(i)=\tau_{i}\}\,.
\]
and let $L^2(Q)$ be a representation of $S_n$ with the natural
action $(gq)(i)=q(g^{-1}i)$. We will mainly work with these representations, and we relate
them to the irreducible ones by
\begin{lem}\label{lem:Q}
Let $\sigma_1\ge\dotsc\ge\sigma_m$ with $\sum \sigma_i=n$. Then 
\begin{enumerate}
\item $[\sigma_1,\dotsc,\sigma_m]$ can be embedded in $L^2(Q(\sigma))$.
\item For any $q\in Q$ there is a non-zero element of this embedding which is
invariant under any permutation $\phi$ that preserves the coloring
$q$ i.e.~to any $\phi$ for which $q(\phi(i))=q(i)$ for all $i$.
\end{enumerate}
\end{lem}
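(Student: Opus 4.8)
The plan is to identify three avatars of the representation $[\sigma_1,\dots,\sigma_m]$ and to track one explicit vector through them. The defining model is $[\sigma_1,\dots,\sigma_m]=Rc_\sigma$ with $c_\sigma=a_\sigma b_\sigma$, where $a_\sigma=\sum_{h\in H}h$ is the row symmetrizer and $b_\sigma=\sum_{v\in V}\sign(v)v$. As noted just before the statement, $Ra_\sigma\cong L^2(Q(\sigma))$ as $S_n$-representations, via $ga_\sigma\mapsto gq_0$, where $q_0\in Q(\sigma)$ is the standard coloring (the first $\sigma_1$ indices colored $1$, the next $\sigma_2$ colored $2$, and so on), whose stabilizer in $S_n$ is exactly $H$. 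I would then introduce $\psi\colon Ra_\sigma\to R$, right multiplication by $b_\sigma$, $\psi(x)=xb_\sigma$. Since $S_n$ acts on the left, $\psi$ is $S_n$-equivariant, and its image is $Ra_\sigma b_\sigma=Rc_\sigma=[\sigma_1,\dots,\sigma_m]$; moreover $\psi(a_\sigma)=c_\sigma$, which is nonzero since $Rc_\sigma$ is irreducible, hence nonzero. So $[\sigma_1,\dots,\sigma_m]$ is a nonzero quotient of $Ra_\sigma$; by semisimplicity I pick an $S_n$-invariant complement $U$ to $\ker\psi$ inside $Ra_\sigma$, so that $\psi|_U\colon U\xrightarrow{\sim}[\sigma_1,\dots,\sigma_m]$ and $U\subseteq Ra_\sigma\cong L^2(Q(\sigma))$. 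This is the embedding of part (1).

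For part (2) I would first treat $q=q_0$. Let $u_0$ be the $U$-component of $a_\sigma$ under $Ra_\sigma=\ker\psi\oplus U$. Then $\psi(u_0)=\psi(a_\sigma)=c_\sigma\neq 0$, so $u_0\neq 0$. Since $ha_\sigma=a_\sigma$ for all $h\in H$ and the projection $Ra_\sigma\to U$ is $S_n$-equivariant, $u_0$ is fixed by $H$. Transporting through $Ra_\sigma\cong L^2(Q(\sigma))$, the image of $u_0$ is a nonzero vector of the embedded copy of $[\sigma_1,\dots,\sigma_m]$ fixed by $H=\operatorname{Stab}(q_0)$. For a general $q\in Q(\sigma)$ I choose $g\in S_n$ with $gq_0=q$, so $\operatorname{Stab}(q)=gHg^{-1}$; then $gu_0\in U$ is nonzero, and $(ghg^{-1})(gu_0)=g(hu_0)=gu_0$ for all $h\in H$, so the image of $gu_0$ is the desired invariant element.

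I do not expect a genuine obstacle here: the argument is essentially the bookkeeping among the three models together with complete reducibility. The one point deserving care is that the identification $Ra_\sigma\cong L^2(Q(\sigma))$ used above is $S_n$-equivariant and carries $a_\sigma$ to the point-mass at the standard coloring $q_0$ whose stabilizer is $H$; once that is pinned down the rest is formal. It is also worth stating explicitly that ``this embedding'' in part (2) refers to the particular embedding constructed in part (1), so it suffices to exhibit a single embedding enjoying the invariance property rather than to argue about all of them.
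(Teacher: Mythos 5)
Your proof is correct, but it takes a somewhat different route from the paper's. For part (1) the paper embeds $[\sigma]\cong Rc_\sigma$ into the two-sided module $Ra_\sigma R=\sum_{g}Ra_\sigma g$, a sum of copies of $Ra_\sigma$, and then invokes lemma \ref{lem:sum} (an irreducible component of a sum of invariant subspaces is a component of one summand); you instead realize $Rc_\sigma=Ra_\sigma b_\sigma$ directly as the image of the $S_n$-equivariant map $\psi(x)=xb_\sigma$ on $Ra_\sigma$ and split off an invariant complement $U$ of $\ker\psi$, which makes lemma \ref{lem:sum} unnecessary. For part (2) the paper argues abstractly: $c_\sigma$ is $H$-fixed in $Rc_\sigma$, and possessing a nonzero $H$-fixed vector is an isomorphism-invariant property, so the (abstract) embedded copy has one as well; you instead exhibit the fixed vector concretely as the $U$-component $u_0$ of $a_\sigma$, nonzero because $\psi(u_0)=c_\sigma\neq 0$ and $H$-fixed because the projection along $\ker\psi$ is equivariant. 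Both proofs reduce a general $q$ to the standard coloring by the same conjugation argument, and both rest on the same semisimplicity input; what your version buys is constructiveness --- the invariant vector sits visibly inside the specific embedding produced in part (1), which is precisely the point the paper handles by remarking that the existence of an $H$-invariant vector is ``an abstract property of a representation'' --- at the modest cost of the kernel/complement bookkeeping and of pinning down that the identification $Ra_\sigma\cong L^2(Q)$ sends $a_\sigma$ to the delta function at $q_0$ (for which you correctly note that equivariance and $\mathrm{Stab}(q_0)=H$ are the only facts needed).
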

For example, for $[n-1,1]$ we have that $m=2$ and an element $q\in Q$
is uniquely 
identified by $q^{-1}(2)$ which is an element of
$\{1,\dotsc,n\}$. Hence $|Q|=n$ and $L^2(Q)$ can be though of as
$\RR^n$ with $S_n$ acting by permutation matrices
(this representation is known as the \emph{standard
  representation} \label{pg:stdrep} of
$S_n$). Clearly the constant vectors form a one-dimensional invariant
subspace, and so is their orthogonal complement, the vectors whose
entries sum to 0. It is not difficult to see (directly from the
definition) that both are irreducible representations. The first is
the trivial, hence the second is $[n-1,1]$. 

Now, the second clause of the lemma in this example is as follows. Take some $q\in Q$
i.e.~$q(i)=1$ for all $i\in\{1,\dotsc,n\}$ except one $k$ for which
$q(k)=2$. A permutation $\phi$ preserves $q$ if and only if
$\phi(k)=k$. An element of $L^2(Q)$ invariant to any such $\phi$ must
be constant on $\{1,\dotsc,n\}\setminus\{k\}$, and the only such
element (up to multiplication by constants) in the subspace isomorphic to $[n-1,1] $ is 
$(1,\dotsc,1,1-n,1\dots,1)$ where the position of the negative entry
is $k$ (we are not interested in the uniqueness, only in the existence).

We will prove lemma \ref{lem:Q}
immediately after this simple claim
\begin{lem} \label{lem:sum} Let $\rho:S_n\to\GL(V)$ be a representation and let 
$V_1,\dotsc,V_m$ be subspaces of $V$ invariant under the action of
$S_n$. Let $W$ be an irreducible component of $\sum V_i$. Then $W$
is isomorphic to a component of one of the $V_i$.
\end{lem}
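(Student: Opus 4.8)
The plan is to reduce Lemma \ref{lem:sum} to the semisimplicity of representations of $S_n$ (equivalently of finite groups over $\C$), which guarantees that every invariant subspace has an invariant complement. First I would observe that since each $V_i$ is $S_n$-invariant, so is the sum $V_1+\dotsb+V_m$; replacing $V$ by this sum we may as well assume $V=\sum V_i$. Now $V$ is a direct sum of irreducibles, and by the uniqueness (up to order and isomorphism) of the irreducible decomposition, an irreducible $W$ appearing in $V$ is isomorphic to one of those factors; so it suffices to show that \emph{every} irreducible summand in a decomposition of $V=\sum V_i$ is isomorphic to a component of some $V_i$.

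The cleanest way to see this is to build a surjection. Consider the external direct sum $V_1\oplus\dotsb\oplus V_m$ and the $S_n$-equivariant map $\Phi\colon V_1\oplus\dotsb\oplus V_m\to V$ given by $(v_1,\dotsc,v_m)\mapsto v_1+\dotsb+v_m$; its image is exactly $\sum V_i = V$. Since representations of $S_n$ are semisimple, the kernel of $\Phi$ has an $S_n$-invariant complement $U$ inside $V_1\oplus\dotsb\oplus V_m$, and $\Phi|_U\colon U\to V$ is an isomorphism of representations. Decomposing each $V_i$ into irreducibles expresses $V_1\oplus\dotsb\oplus V_m$, and hence $V\cong U$, as a subrepresentation of a direct sum of the irreducible components of the $V_i$. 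Any irreducible component $W$ of $V$ is therefore isomorphic to an irreducible component of $V_1\oplus\dotsb\oplus V_m$, i.e.\ to a component of one of the $V_i$, which is the claim.

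An even more hands-on alternative, avoiding talk of complements, is induction on $m$. For $m=1$ there is nothing to prove. For the inductive step, write $V'=V_1+\dotsb+V_{m-1}$, so $\sum_{i=1}^m V_i = V'+V_m$, and consider the natural $S_n$-map $V'\oplus V_m\to V'+V_m$. If $W$ is an irreducible component of $V'+V_m$, then projecting $W$ (after passing to an invariant complement of the kernel, or just using Schur's lemma) to one of the two factors gives a nonzero equivariant map $W\to V'$ or $W\to V_m$, which by Schur is injective, exhibiting $W$ as a component of $V'$ or of $V_m$; in the former case the inductive hypothesis finishes the job. Either route is short.

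I do not expect a genuine obstacle here: the statement is essentially a restatement of semisimplicity plus Schur's lemma, both of which are recalled in the excerpt. The only point requiring a little care is making sure the maps used are genuinely $S_n$-equivariant (the summation map and the coordinate projections on external direct sums clearly are), and invoking the uniqueness of the irreducible decomposition so that ``is isomorphic to a factor of'' is well defined. If anything is mildly delicate it is just the bookkeeping of which decomposition one is comparing against, but this is routine.
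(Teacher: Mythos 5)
Your argument is correct, but it takes a different route from the paper's. The paper reduces to two subspaces and decomposes \emph{internally}: it sets $U=V_1\cap V_2$, chooses invariant complements $V_i=U\oplus U_i$ (possible by semisimplicity), observes that $V_1+V_2=U\oplus U_1\oplus U_2$, and concludes by uniqueness of the decomposition into irreducibles, the general case following by induction on $m$. You instead work with the \emph{external} direct sum $V_1\oplus\dotsb\oplus V_m$ and the equivariant summation map onto $\sum V_i$: complementing its kernel exhibits $\sum V_i$ as (isomorphic to) a subrepresentation of $\bigoplus_i V_i$, so every irreducible component of $\sum V_i$ is a component of some $V_i$. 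Both proofs rest on the same two ingredients, complementation of invariant subspaces and uniqueness of the irreducible decomposition, but your surjection argument handles all $m$ at once, avoids intersections, and generalizes verbatim to any equivariant quotient of a direct sum, whereas the paper's version is a two-subspace statement plus an implicit induction. Your inductive alternative via coordinate projections is also sound; the only small points to keep straight there are that the projections are only defined after replacing $V'+V_m$ by the invariant complement of the kernel inside $V'\oplus V_m$ (as you note), that injectivity of a nonzero equivariant map out of an irreducible needs only invariance of its kernel rather than Schur's lemma in the scalar form quoted in the paper, and that an irreducible subrepresentation is isomorphic to a component, again by complementation.
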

\begin{proof} Clearly it is enough to prove this for just two
  subspaces $V_1$ and $V_2$. Denote $U=V_1\cap V_2$. Then $U$ is
  invariant under the action of $S_n$. Since every invariant subspace is
  complemented \cite[Proposition 1.5.2]{S00} we can write $V_i=U\oplus
  U_i$ and $V_1+V_2 = U\oplus U_1\oplus U_2$. The lemma now follows by
  the uniqueness of decomposition into irreducible representations.
\end{proof}

\begin{proof}[Proof of lemma \ref{lem:Q}] Examine 
\[
Ra_\sigma R=\sum_{g\in S_n} Ra_\sigma g\,.
\]
It is easy to check that each $Ra_\sigma g$ is a representation which is
isomorphic to $Ra_\sigma$ ($g$ is an invertible element of the group
ring $R$). Since $[\sigma]\cong Rc_\sigma\subset Ra_\sigma R$
we get by lemma \ref{lem:sum} that $[\sigma]$ can be embedded into
$Ra_\sigma$. By the discussion before the statement of the lemma,
$Ra_\sigma\cong L^2(Q)$ so the first claim of lemma \ref{lem:Q} is
proved.

For the second claim, note that the permutations $\phi$ as above form
a group, which we will denote by $H_q$. Now, it is not important which $q$
one takes, since if $v$ is invariant to the action of $H_q$ then 
$gv$ is invariant to the action of $gH_qg^{-1}$; and
$gH_qg^{-1}=H_{gq}$ which can give any $q'\in Q$. So we will verify
the claim for the $H$ defined in (\ref{eq:DefH}). But in this case it
is clear that $c_\sigma$ itself is invariant to the action of $H$. Since
the property of existence of a vector invariant to $H$ is an abstract
property of a representation, then it is not important that the
vector $c_\sigma$ is not necessarily in $Ra_\sigma$ but just in some
isomorphic copy. The lemma is thus proved.
\end{proof}

It is interesting to note that the irreducible components of $L^2(Q)$
are known and are all $[\tau]$ with $\tau\trianglerighteq\sigma$
\cite[Lemma 2.1.10]{JK81}. But we will not use it.

\section{Star graphs and the Gelfand-Tsetlin basis}

For $n\geq k \geq 1$, let $K_{n,k}$ be the graph with vertices
$\{1,\dotsc,n\}$ where all the vertices $\{1,\dotsc,k\}$ are connected
with each other, and the remaining $n-k$ vertices are isolated. By
abuse of notation, we will also denote by $K_{n,k}$ the adjacency
matrix of this graph. For any matrix $A$ we denote by $\wt(A)=\sum_{i<j}a_{i,j}$. 
We put $\str_{n,k}=K_{n,k}-K_{n,k-1}$, a star graph having the vertex
$k$ connected with each of $1,\dotsc,k-1$. We remark that as elements
of the group ring, $\str_{n,k}$ are known as the Jucys-Murphy elements. 

In this section, we shall find all the eigenvalues of
$\sigma(\Delta_{\str_{n,k}})=\sum \sigma(\id)-\sigma((ij))$ for any
irreducible representation $\sigma$ of $S_n$. These will serve as
useful examples, but more
importantly will be used in the proof of the main theorem in section
\ref{sec:main}. We will use the Gelfand-Tsetlin basis, an idea also
used in \cite{D09}.


Now, the theory of Gelfand-Tsetlin pairs and bases is very deep with
many analogs in different categories (see e.g.~\cite{M06} for a
survey) but we will not need any of it here. For our purposes it is
enough to define the basis inductively as follows: If $n=1$ then the
representation must be one dimensional and we take a nonzero vector as
the basis vector. For $n>1$, we consider the natural embedding
$S_{n-1}\hookrightarrow S_n$ whose image is $\{\pi \in S_n:
\pi(n)=n\}$. We decompose the restriction $\sigma|_{S_{n-1}}$ into
irreducible representations of $S_{n-1}$ as $\bigoplus_i V_i$, take the Gelfand-Tsetlin basis of each $V_i$ and define our basis as the union of the resulting bases.

If $\sigma=[\alpha]$, where $\alpha$ is a Young diagram of size $n$,
then $\sigma|_{S_{n-1}}= \bigoplus_{\beta}[\beta]$, where
$\beta$ goes over all the Young diagrams of size $n-1$, obtained by
removing, in all possible ways, one box from $\alpha$. See \cite[\S 2.8]{S00}.
Hence, the elements of the Gelfand-Tsetlin basis of $[\alpha]$ are in one to one correspondence with the sequences $\alpha=\alpha_n, \alpha_{n-1},\dotsc, \alpha_1 = \tiny\yng(1)$ of young diagrams, in which each $\alpha_i$ for $i<n$ is obtained from $\alpha_{i+1}$ by removing one box. 

\begin{lem} \label{lem9} Let $G=\str_{n,k}$ for some $2\leq k \leq n$, and let $\sigma$ be an irreducible representation of $S_n$. \begin{enumerate} \item The Gelfand-Tsetlin basis of $\sigma$ is a basis of eigenvectors for $e_G$ acting on $\sigma$. \item Let $\alpha=\alpha_n, \alpha_{n-1},\dotsc, \alpha_1 = \tiny\yng(1)$ be a sequence of Young diagrams, where $\alpha_{i-1}$ is obtained from $\alpha_i$ by removing the $x_i$th box at the $y_i$th row, and $\sigma=[\alpha_n]$. Let $v$ be the Gelfand-Tsetlin basis element corresponding to the above sequence. Then, the eigenvalue of $\Delta_G$ with respect to $v$ is $(k-1)+y_k-x_k$. \end{enumerate} \end{lem}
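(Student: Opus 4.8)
The plan is to identify $\Delta_{\str_{n,k}}$, as an element of the group ring, with a shifted Jucys--Murphy element, and then to use the one structural feature of the Gelfand--Tsetlin basis that we actually have at our disposal: a basis vector corresponding to a chain $\alpha_n\supset\dotsb\supset\alpha_1$ lies, for \emph{every} $j\le n$ simultaneously, in an irreducible $S_j$-sub\-re\-pre\-sen\-ta\-tion of type $[\alpha_j]$ inside $\sigma|_{S_j}$ --- this is exactly how the basis was constructed, via the (multiplicity-free) branching rule recalled above. Since $\str_{n,k}$ has edges $\{i,k\}$ for $1\le i\le k-1$, we have $\Delta_{\str_{n,k}}=(k-1)\id-X_k$, where $X_k=\sum_{i=1}^{k-1}(i\,k)$ is the group-ring element $\str_{n,k}$, i.e.\ the $k$-th Jucys--Murphy element. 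The key algebraic remark is that $X_k=Z_k-Z_{k-1}$, where $Z_j:=\sum_{1\le a<b\le j}(a\,b)$ is the sum of all transpositions of $S_j$; being a sum over a full conjugacy class, $Z_j$ is central in $\RR[S_j]$.

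Granting this, part (1) is essentially automatic. Fix an irreducible $\sigma=[\alpha_n]$ and a Gelfand--Tsetlin vector $v$ with chain $\alpha_n\supset\dotsb\supset\alpha_1$. For each $j$, the vector $v$ lies in a copy of the irreducible $[\alpha_j]$, on which the central element $Z_j$ acts, by Schur's lemma, as a scalar $\omega(\alpha_j)$ depending only on $\alpha_j$ (not on the embedding). Applying this for $j=k$ and $j=k-1$ gives $X_kv=\big(\omega(\alpha_k)-\omega(\alpha_{k-1})\big)v$, hence
\[
\Delta_{\str_{n,k}}\,v=\big((k-1)-\omega(\alpha_k)+\omega(\alpha_{k-1})\big)v,
\]
so every Gelfand--Tsetlin vector is an eigenvector, which is part (1). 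This also reduces part (2) to the single claim that $\omega(\alpha_k)-\omega(\alpha_{k-1})$ equals $x_k-y_k$, the content (column index minus row index) of the box deleted in passing from $\alpha_k$ to $\alpha_{k-1}$.

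To finish I would prove the stronger statement that $\omega(\lambda)=\sum_{b\in\lambda}\mathrm{ct}(b)$ for every Young diagram $\lambda$, where $\mathrm{ct}(b)$ denotes content; the required difference formula then follows by telescoping along the chain $\alpha_1\subset\dotsb\subset\alpha_n$, using that $\omega$ vanishes on the one-box diagram since $S_1$ has no transpositions. Taking the trace of $Z_{|\lambda|}$ on $[\lambda]$ and using that all transpositions of $S_{|\lambda|}$ are conjugate, one gets $\omega(\lambda)=\binom{|\lambda|}{2}\,\chi^{\lambda}(\tau)/\dim[\lambda]$ for $\tau$ a transposition, and the identity $\binom{|\lambda|}{2}\,\chi^{\lambda}(\tau)/\dim[\lambda]=\sum_{b\in\lambda}\mathrm{ct}(b)$ is the classical Frobenius formula for the character of a transposition (see e.g.\ \cite{FH91} or \cite{JK81}); alternatively it can be obtained by a short induction on $|\lambda|$ from the branching rule. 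Substituting $x_k-y_k$ for $\omega(\alpha_k)-\omega(\alpha_{k-1})$ yields the eigenvalue $(k-1)+y_k-x_k$ asserted in part (2).

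The main obstacle is precisely this last content-sum identity for $\omega(\lambda)$: everything else is a soft consequence of the centrality of $Z_j$, Schur's lemma, and the inductive construction of the basis. A minor point that deserves an explicit sentence is that the \emph{same} vector $v$ lies in an irreducible $S_j$-subrepresentation for all $j\le n$ at once --- immediate from the definition of the Gelfand--Tsetlin basis, but worth stating cleanly since the whole argument rests on it.
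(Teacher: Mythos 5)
Your proposal is correct and follows essentially the same route as the paper: both rest on the centrality of the sum of transpositions of $S_j$ (equivalently $\Delta_{K_{n,j}}$), Schur's lemma applied along the chain of restrictions defining the Gelfand--Tsetlin basis, and the classical formula for the character of a transposition (the paper cites it as Lemma 7 of Diaconis--Shahshahani, which is algebraically identical to your content-sum identity $\omega(\lambda)=\sum_{b\in\lambda}\mathrm{ct}(b)$). The only difference is cosmetic but pleasant: phrasing the scalar as a content sum makes the eigenvalue of the Jucys--Murphy element drop out as the content of the removed box by a one-line difference, whereas the paper carries out an explicit two-case binomial computation.
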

 
\begin{proof}
(1) Let us first prove that each vector in the Gelfand-Tsetlin basis of
$\sigma$ is an eigenvector for each of
$K_{n,1},K_{n,2},\dotsc,K_{n,n}$. We will do it by induction on
$n$. For $n=1$ the claim is vacuous. For $n>1$, we decompose
$\sigma|_{S_{n-1}}$ into irreducible representations and using the
induction hypothesis, conclude that the Gelfand-Tsetlin basis vectors
are eigenvectors of $K_{n,1},\dotsc,K_{n,n-1}$. As for $K_{n,n}$, the
element $\Delta_{K_{n,n}}$ lies in the center of $\R[S_n]$ (being a linear
combination of the sum of all transpositions and the identity). Hence,
by Schur's lemma it acts as a scalar on each irreducible representation of $S_n$. This finishes the inductive step. Hence, each vector of the Gelfand-Tsetlin basis is an eigenvector of $G=K_{n,k}-K_{n,k-1}$.

(2) Let us now find the scalar by which $K_{n,i}$ acts on the
irreducible $S_i$-representation $[\alpha_i]$, assuming that
$\alpha_i$ has row lengths $l_1,\dotsc,l_m$. For that matter, we
invoke the trace formula from \cite[lemma 7]{DS81}, by which, the trace of a transposition acting on $[\alpha_i]$ is 
$$ \frac{\dim [\alpha_i]}{\binom{i}{2}}\sum_{j=1}^m\left (\binom{l_j-j+1}{2}-\binom{j}{2}\right)$$
where we define $\binom{x}{2}=\frac{1}{2}x(x-1)$ also for $x<2$. Hence, $\Delta_{K_{n,i}}=\sum_{1\leq j<k\leq i}(\id-(jk))$ acts on $[\alpha_i]$ via the scalar 
$$ c_i := \wt(K_{n,i})-\sum_{j=1}^m\left (\binom{l_j-j+1}{2}-\binom{j}{2}\right).$$
Let us find the eigenvalue of $\Delta_G=\Delta_{K_{n,k}}-\Delta_{K_{n,k-1}}$ with respect to our basis vector $v$. This eigenvalue is equal to $c_k-c_{k-1}$, and we will now simplify it.
Recall that passing from $\alpha_k$ to $\alpha_{k-1}$ is done by
removing the rightmost box from the $y_k^\textrm{th}$ row. We
distinguish two cases

\emph{Case 1:} $\alpha_k$ and
  $\alpha_{k-1}$ have the same number of rows. In that case, all the
  summands except the $y_k^\textrm{th}$ one cancel out, and we are
  left with 
\begin{align*}
 \wt(\str_{n,k})-\left(\binom{l_{y_k}-y_k+1}{2} - \binom{l_{y_k}-y_k}{2}\right) 
&= \wt(\str_{n,k}) - (l_{y_k}-y_k) \\ &=  k-1-(x_k-y_k).
\end{align*}

\emph{Case 2:} $\alpha_k$ is obtained from $\alpha_{k-1}$ by removing the unique box in the $y_k^\textrm{th}$ row. In that case, we have $l_{y_k}=1$, and the scalar is
\begin{align*} 
\wt(\str_{n,k})&-\left(\binom{l_{y_k}-y_k+1}{2}-\binom{y_k}{2}\right)
=
\wt(\str_{n,k})-\left(\binom{2-y_k}{2}-\binom{y_k}{2}\right)= \\ 
& = \wt(\str_{n,k})-(1-y_k) = k-1-(x_k-y_k).
\end{align*}
Hence, $\Delta_{\str_{n,k}}v= (k-1-(x_k-y_k))v$, and the result follows.
\end{proof} 

\begin{corollary}\label{cor:asympval}For any $n$, $[2,2,1^{n-4}]\nsucc
  [2,1^{n-2}]$.
\end{corollary}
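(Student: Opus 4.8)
The plan is to exhibit a single nonnegative weight matrix $A$ for which
\[
\lambda_1(A;[2,2,1^{n-4}])>\lambda_1(A;[2,1^{n-2}]).
\]
By the definition of the Aldous order, $[2,2,1^{n-4}]\succeq[2,1^{n-2}]$ would force $\lambda_1(A;[2,1^{n-2}])\ge\lambda_1(A;[2,2,1^{n-4}])$ for \emph{every} $A$, so any single such $A$ rules out $\succeq$, and a fortiori $\succ$. (Here $n\ge 4$, so that both partitions make sense and are distinct.) Following the remark in the introduction that the graph witnessing the failure is a star, I would take $A=\str_{n,n}$, that is, the case $k=n$ of Lemma~\ref{lem9} (legitimate since $k=n\ge 2$).

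For this choice, Lemma~\ref{lem9} gives the full spectrum of $\Delta_{\str_{n,n}}$ on an irreducible $[\alpha]$ in the Gelfand--Tsetlin basis: a basis vector indexed by a chain $\alpha=\alpha_n\supset\alpha_{n-1}\supset\dotsb\supset\alpha_1$ has eigenvalue $(n-1)+y_n-x_n$, where $(x_n,y_n)$ records the column and row of the box removed in passing from $\alpha_n=\alpha$ to $\alpha_{n-1}$. As this depends only on the first step of the chain, the set of eigenvalues of $\Delta_{\str_{n,n}}$ on $[\alpha]$ is exactly $\{\,(n-1)+y-x:(x,y)\text{ a removable corner of }\alpha\,\}$, and therefore
\[
\lambda_1(\str_{n,n};[\alpha])=(n-1)+\min_{(x,y)}(y-x),
\]
the minimum taken over the removable corners $(x,y)$ of $\alpha$.

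It then remains only to count corners. The diagram $[2,1^{n-2}]$ is a hook with two removable corners: the box $(x,y)=(2,1)$ ending the first row, and the box $(1,n-1)$ at the foot of the first column; hence $\lambda_1(\str_{n,n};[2,1^{n-2}])=(n-1)+\min(-1,\,n-2)=n-2$. The diagram $[2,2,1^{n-4}]$ has removable corners $(2,2)$ and $(1,n-2)$ (for $n=4$ the second is absent, leaving only $(2,2)$), hence $\lambda_1(\str_{n,n};[2,2,1^{n-4}])=(n-1)+\min(0,\,n-3)=n-1$. Therefore $\lambda_1(\str_{n,n};[2,2,1^{n-4}])=n-1>n-2=\lambda_1(\str_{n,n};[2,1^{n-2}])$, which is the inequality sought.

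I do not expect a real obstacle here: the argument is a direct reading of Lemma~\ref{lem9}, and the only points needing care in the write-up are verifying that the corners listed are \emph{all} the removable corners of each shape (immediate from the diagrams) and checking the boundary value $n=4$, where $[2,2,1^{n-4}]=[2,2]$ has a single removable corner but the value $n-1$ is unchanged.
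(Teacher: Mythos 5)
Your proposal is correct and follows essentially the same route as the paper: evaluate $\lambda_1$ on the star $\str_{n,n}$ via Lemma \ref{lem9}, note the eigenvalue depends only on the first removed box, and compare the removable corners of $[2,2,1^{n-4}]$ (giving $n-1$) and $[2,1^{n-2}]$ (giving $n-2$). The extra care you take with the direction of $\succeq$ and the $n=4$ boundary case is consistent with, and slightly more explicit than, the paper's argument.
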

\begin{proof} Examine the star graph $G=\str_{n,n}$. We get that
  $\lambda_1(G;\rho)=\min\{ (n-1)+y-x\}$ where the minimum is taken over
  all boxes $(x,y)$ which can be removed to get a legal Young
  diagram. For $[2,2,1^{n-4}]$ these boxes are $(2,2)$ and (if $n>4$)
  $(1,n-2)$. The minimum is achieved at $(2,2)$ so
  $\lambda_1(G;[2,2,1^{n-4}])=n-1$. For $[2,1^{n-2}]$ the minimum is
  achieved at $(2,1)$ giving that $\lambda_1(G;[2,1^{n-2}])=n-2$. 
\end{proof}
Since $[2,2,1^{n-4}] \vartriangleright [2,1^{n-2}]$ this shows that
$\succ$ and $\vartriangleright$ differ for every $n$. Similarly one
can show that $[2^i,1^{n-2i}]\nsucc[2^j,1^{n-2j}]$ for any $j< i \le n/2$
giving a chain of length $\lfloor n/2\rfloor$ in the domination
order, no two elements of which are comparable in the Aldous order.

\subsection{Quasi-complete graphs} Let $a_2,a_3,\dotsc,a_n$ be non-negative numbers and examine
\[
\sum_{k=2}^n a_k \str_{n,k}
\]
This is not just any combination of stars, but stars formed in a special order, each added vertex connected to all existing ones. We call such graphs quasi-complete graphs. See figure \ref{fig:qc}. 
\begin{figure}
\input{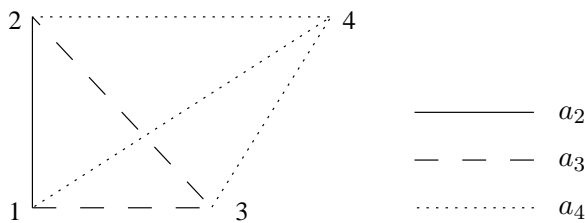}
\caption{\label{fig:qc}A quasi-complete graph.}
\end{figure}
Since the Gelfand-Tsetlin basis of $[\alpha]$ is a basis for of eigenvalues for
$\str_{n,k}$ for all $k$ it is also a basis of eigenvalues for their
linear combination. Further, the basis element corresponding to a sequence $\alpha_n,\dotsc,\alpha_1$ as in lemma \ref{lem9} gives the eigenvalue
\begin{equation}\label{eq:qc}
\wt(G)-\sum_{k=2}^n a_k (x_k-y_k).
\end{equation}
Let us make three remarks about quasi-complete graphs:

1. Taking $a_k$ to be very fast decreasing (taking $a_k=n^{-2k}$ is good enough) it is easy to see that the minimal eigenvalue is achieved when the boxes are removed as follows: first remove the lowest row completely, then the second lowest row completely, etc. This shows that if $\alpha < \beta$ in the lexicographical order then $\alpha \nsucc \beta$. As a corollary we get that $\alpha \preceq \beta$ and $\beta\preceq\alpha$ imply $\alpha=\beta$.\label{pg:precsucceq}

2. This family of graphs is not rich enough to determine the Aldous order.
  For example, take the fact that $[n+1,n-1]\nsucc [n,n]$ which can be verified for small $n\ge 3$ by direct calculation for the circle graph (we have no proof that it holds for all $n$ but this is not relevant at this point). Quasi-complete graphs cannot demonstrate this fact, because any sequence of Young diagrams as in lemma \ref{lem9} for $[n,n]$ must start by removing the box at $(n,2)$, while a sequence for $[n+1,n-1]$ \emph{may} start by removing the box at $(n+1,1)$, and continue by mimicking the first sequence, since in both cases $\alpha_{2n-1}=[n,n-1]$. So we see that
\[
\lambda_1(G;[n+1,n-1])\le \lambda_1(G;[n,n])
\]
for any quasi-complete graph $G$.

3. As an approximation to the Aldous order, one may ask whether two young diagrams $\sigma, \tau$ of size $n$ satisfy $\lambda_1(G; \sigma) \leq \lambda_1(G; \tau)$ for all quasi-complete graphs $G$. Using formula (\ref{eq:qc}),  the answer can be put in terms of the following combinatorial game: Let players $A$ and $B$ get the young diagrams $\sigma$ and $\tau$, respectively. Both players fill out their diagrams as follows: On each square at position $(i,j)$ whey write the number $j-i$.
The game has $n$ steps. At each step of the game, player $B$ breaks off a square from his diagram, in a way that leaves a legal young diagram, and announces the number on that square. Then, player $A$ does the same. We say that player $A$ wins the game if at each step, her number is no less than player $B$'s number.
It is not hard to see that player $A$ has a winning strategy if, and
only if, $\lambda_1(G; \sigma) \leq \lambda_1(G; \tau)$ for all
quasi-complete graphs $G$.


\section{Proof of the main theorem}\label{sec:main}
The proof requires that we examine closely the \emph{maximal}
eigenvalue of $\sigma(\Delta_A)$ (which is of course also its norm as an
$l^2$ operator, as this is a positive matrix). We denote it by
\[
\lambda_{\max}(A;\sigma).
\]
We keep from the previous section the notation
$\wt(A)=\sum_{i<j}a_{i,j}$. For $k<n$,
we denote by $\mathcal{S}_{k}$ the set of representations
corresponding to Young diagrams of size $n$ such
that the first row has $\geq n-k$ boxes (so all the other rows
have, combined, $\leq k$ boxes). Denote 
\[
\mathcal{S}_k\otimes\sgnrep = \{\sigma\otimes\sgnrep :
  \sigma\in\mathcal{S}_k\}
\]
which by (\ref{eq:reverse}) is also the set of representations
corresponding to Young diagrams of size $n$ such that the leftmost
column has $\geq n-k$ boxes.

As in the previous section, we will use ``matrix'' and ``weighted graph''
interchangeably, understanding that all our matrices have non-negative
entries, and that the corresponding graph has an edge at any $(i,j)$
for which $a_{i,j}\ne 0$. In particular when we subtract graphs and
weighted graphs, we are in fact subtracting the corresponding
matrices. Let us start with two standard facts which we prove for the
convenience of the reader.


\begin{lem}
For any $A$ with entries $a_{ij}$ and any representation $\sigma$
\begin{equation}
\lambda_{\max}(A;\sigma)=2\wt(A)-\lambda_1(A;\sigma \otimes \sgnrep). \label{eq:dual}
\end{equation}
and
\begin{equation}
\lambda_{\max}(A;\sigma)\le2\wt(A)\label{eq:triv}
\end{equation}
\end{lem}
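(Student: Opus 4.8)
The plan is to prove both identities by relating the spectrum of $\sigma(\Delta_A)$ to that of $(\sigma\otimes\sgnrep)(\Delta_A)$ via the relation $(\sigma\otimes\sgnrep)((ij)) = \sign((ij))\,\sigma((ij)) = -\sigma((ij))$, which holds since every transposition is odd. First I would write, for a fixed $A$,
\[
\sigma(\Delta_A) = \sum_{i<j} a_{ij}\big(\sigma(\id) - \sigma((ij))\big)
\qquad\text{and}\qquad
(\sigma\otimes\sgnrep)(\Delta_A) = \sum_{i<j} a_{ij}\big(\sigma(\id) + \sigma((ij))\big),
\]
on the same underlying space (the sign twist does not change the dimension or the identity action, only flips the sign on each transposition). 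Adding these two matrices gives
\[
\sigma(\Delta_A) + (\sigma\otimes\sgnrep)(\Delta_A) = \sum_{i<j} 2a_{ij}\,\sigma(\id) = 2\wt(A)\cdot I.
\]
So the two positive-semidefinite matrices $\sigma(\Delta_A)$ and $(\sigma\otimes\sgnrep)(\Delta_A)$ sum to the scalar matrix $2\wt(A)I$; in particular they commute and are simultaneously diagonalizable, and their eigenvalues pair up: if $\mu$ is an eigenvalue of $\sigma(\Delta_A)$ then $2\wt(A)-\mu$ is an eigenvalue of $(\sigma\otimes\sgnrep)(\Delta_A)$, with equal multiplicity. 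Taking $\mu = \lambda_{\max}(A;\sigma)$, the matching eigenvalue $2\wt(A) - \lambda_{\max}(A;\sigma)$ of $(\sigma\otimes\sgnrep)(\Delta_A)$ must be its \emph{smallest}, since $x\mapsto 2\wt(A)-x$ is order-reversing; that is exactly $\lambda_1(A;\sigma\otimes\sgnrep)$, which gives \eqref{eq:dual}.

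For \eqref{eq:triv}, I would simply combine \eqref{eq:dual} with the positive-semidefiniteness of $(\sigma\otimes\sgnrep)(\Delta_A)$ noted in the introduction: $\lambda_1(A;\sigma\otimes\sgnrep)\ge 0$, hence $\lambda_{\max}(A;\sigma) = 2\wt(A) - \lambda_1(A;\sigma\otimes\sgnrep) \le 2\wt(A)$. Alternatively one gets \eqref{eq:triv} directly, without the twist, by bounding each summand: $\sigma(\id) - \sigma((ij))$ has eigenvalues in $\{0,2\}$, so its operator norm is at most $2$, and summing with weights $a_{ij}$ (by the triangle inequality for operator norm) gives $\lambda_{\max}(A;\sigma) = \|\sigma(\Delta_A)\| \le \sum_{i<j} 2a_{ij} = 2\wt(A)$. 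I would mention this second route as a remark since it is self-contained.

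There is no real obstacle here; the only point needing a half-sentence of care is the claim that $\sigma$ and $\sigma\otimes\sgnrep$ act on the same space so that the matrices can literally be added — this is immediate from the definition $(\sigma\otimes\sgnrep)(g) = \sgnrep(g)\sigma(g)$ with $\sgnrep$ one-dimensional, so $\sigma\otimes\sgnrep$ is just $\sigma$ rescaled by $\pm 1$ on each group element. The passage from "eigenvalues pair up under $x\mapsto 2\wt(A)-x$" to "the max of one corresponds to the min of the other" is the only spot where one should be explicit that the pairing respects multiplicities (so that extremes match extremes), but this follows at once from simultaneous diagonalizability.
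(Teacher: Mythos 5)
Your proof is correct and is essentially the paper's own argument: both rest on the observation that twisting by $\sgnrep$ flips the sign of every transposition, so $\Delta_A$ acting on $\sigma\otimes\sgnrep$ becomes $2\wt(A)\cdot\mathrm{I}-\sigma(\Delta_A)$, whence the spectra are related by $x\mapsto 2\wt(A)-x$ and \eqref{eq:triv} follows from positive semidefiniteness. Your version just spells out the scalar-matrix identity and the eigenvalue pairing more explicitly than the paper's one-line "linearly isomorphic" statement.
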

\begin{proof}
The action of $\Delta_A=\sum_{i<j}a_{ij}(\id-(ij))$ on $\sigma$ is linearly isomorphic to the action of $$\sum_{i<j}a_{ij}(\id+(ij))=2\wt(A)-\Delta_A$$ on $\sigma \otimes \sgnrep$. This gives (\ref{eq:dual}). The second part follows immediately since $\lambda_1(A;\sigma \otimes \sgnrep) \geq 0$.
\end{proof}

\begin{corollary}\label{cor:n1n} For any $\sigma$,
  $[n]\succeq\sigma\succeq[1^n]$.
\end{corollary}
\begin{proof}Recall that $[n]$ is the trivial representation, so
  $\lambda_1(A;[n])=0$ which shows $[n]\succeq\sigma$. In the other
  direction, $[1^n]=\sgnrep$ so $\lambda_1(A;[1^n])=\lambda_{\max}(A;[1^n])=2\wt(A)\ge\lambda_{\max}(A;\sigma)\ge\lambda_1(A;\sigma)$.
\end{proof}
\begin{lem}
\label{lem:matching}Assume $n\geq4k$ and let $\sigma\in\mathcal{S}_{k}$.
Let $G$ be a graph with $2k$ disjoint edges (i.e.\ $4k$ vertices
have degree $1$, and the remaining $n-4k$ vertices are isolated).
Then\[
\lambda_{\max}(G;\sigma)\leq2k.\]

\end{lem}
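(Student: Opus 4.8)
The plan is to reduce the statement about $\sigma \in \mathcal{S}_k$ to a statement about the standard-like representations $L^2(Q(\sigma))$, use the fact that $G$ is a disjoint union of $2k$ single edges to split $\Delta_G$ into commuting pieces, and then bound the contribution of each edge separately. First I would recall that, since $\sigma\in\mathcal{S}_k$, the Young diagram of $\sigma$ has at most $k$ boxes outside the first row, so $\sigma = [\sigma_1,\dots,\sigma_m]$ with $\sigma_2 + \dots + \sigma_m \le k \le n/4$; in particular $\sigma$ embeds into $L^2(Q(\sigma))$ by Lemma~\ref{lem:Q}, where $Q = Q(\sigma)$ is the set of colorings of $\{1,\dots,n\}$ by $m$ colors with color-$i$ class of size $\sigma_i$. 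Because $\sigma$ is a subrepresentation, it suffices to prove $\lambda_{\max}(G; L^2(Q)) \le 2k$, i.e.\ to bound the operator norm of $\Delta_G$ acting on $L^2(Q)$.

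The key structural observation is that $G = \str = e_1 \sqcup \dots \sqcup e_{2k}$ with the $e_r = \{u_r, v_r\}$ pairwise vertex-disjoint, so $\Delta_G = \sum_{r=1}^{2k} (\id - (u_r v_r))$, and each term $\id - (u_r v_r)$ has operator norm $\le 2$ (its eigenvalues on any unitary representation are $0$ and $2$). The naive triangle inequality only gives $\lambda_{\max} \le 4k$, so the point is to gain the factor of $2$. Here I would use that the transpositions $(u_r v_r)$ pairwise commute and, acting on $L^2(Q)$, the operator $\id - (u_r v_r)$ is $2$ times the orthogonal projection $P_r$ onto the space of functions $q \mapsto f(q)$ that are antisymmetric under swapping $u_r \leftrightarrow v_r$. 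Since the $e_r$ are disjoint, the $P_r$ commute; hence $\Delta_G = 2\sum_r P_r$ is $2$ times a sum of commuting orthogonal projections, whose norm equals $2 \max_{q} \#\{r : q(u_r) \ne q(v_r)\}$ — the maximum, over colorings $q\in Q$, of the number of edges of $G$ that are bichromatic under $q$. So the whole problem becomes: show that every $q \in Q$ makes at most $k$ of the $2k$ disjoint edges bichromatic.

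That last combinatorial claim is where the hypotheses really bite, and I expect it to be the main (though short) obstacle. An edge $e_r = \{u_r,v_r\}$ is bichromatic under $q$ exactly when $u_r$ and $v_r$ get different colors, which forces at least one of $u_r, v_r$ to lie outside the first color class (the color-$1$ preimage $q^{-1}(1)$ has size $\sigma_1 \ge n-k$, i.e.\ its complement has size $\le k$). Since the $2k$ edges are vertex-disjoint, a set of $t$ bichromatic edges contributes $t$ distinct vertices lying in the complement of the first color class, so $t \le |Q \setminus q^{-1}(1)| = n - \sigma_1 \le k$. This gives $\lambda_{\max}(G;L^2(Q)) = 2\max_q \#\{\text{bichromatic }e_r\} \le 2k$, and restricting to the subrepresentation $\sigma$ gives $\lambda_{\max}(G;\sigma) \le 2k$, as desired. (The hypothesis $n \ge 4k$ is what guarantees such a $G$ with $2k$ disjoint edges exists and that the above counting is not vacuous; no finer arithmetic is needed.) A small point to verify carefully is the identification of $\id - (u_r v_r)$ on $L^2(Q)$ with $2P_r$ and the commutation of the $P_r$, but this is immediate from disjointness of the edges and the explicit action $(gq)(i) = q(g^{-1}i)$.
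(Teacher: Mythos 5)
Your proposal is correct. The core of your argument coincides with the paper's: embed $\sigma$ into $L^2(Q(\sigma))$ via Lemma~\ref{lem:Q}, and then exploit that at most $k$ of the $n$ points carry a color other than $1$, so that by vertex-disjointness of the $2k$ edges at most $k$ of them can act nontrivially (be bichromatic) for any given $q$. Where you diverge is in how this counting is converted into a spectral bound. The paper uses a maximum-principle argument: take a top eigenfunction $f$ of $\Delta_G$ on $L^2(Q)$, evaluate at a point $q$ where $|f|$ is maximal, note that monochromatic edges contribute $0$ and each of the at most $k$ remaining edges contributes at most $2|f(q)|$. You instead observe that $\id-(u_rv_r)=2P_r$ with the $P_r$ commuting orthogonal projections, so $\Delta_G$ is simultaneously diagonalizable on $L^2(Q)$ and its top eigenvalue equals $2\max_q\#\{r: q(u_r)\ne q(v_r)\}$; the combinatorial count then finishes. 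Your route is slightly heavier to set up but buys more: it identifies the full spectrum of $\Delta_G$ on $L^2(Q)$ exactly, not just an upper bound, and the needed direction of the norm identity is a one-line check (if a joint eigenvector is antisymmetric under an edge that is monochromatic at some $q$ in its support, it vanishes there). Two small points to tidy: the justification you flag as routine should be exactly this norm identity (the identification $\id-(u_rv_r)=2P_r$ really is immediate), and the set in your counting step should be $\{1,\dotsc,n\}\setminus q^{-1}(1)$ rather than $Q\setminus q^{-1}(1)$.
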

\begin{proof} Recall the representation $L^2(Q)$ of lemma \ref{lem:Q},
namely if $\sigma_{1}\geq\dotsc\geq\sigma_{m}>0$ are the lengths of the
rows of (the Young diagram corresponding to) $\sigma$ then
\[
Q=Q(\sigma)=\{q:\{1,\dotsc,n\}\to\{1,\dotsc,m\}:\#q^{-1}(i)=\sigma_{i}\}.
\]
By lemma \ref{lem:Q} we know that the representation $\sigma$ can be
embedded in $L^{2}(Q)$. Hence it is enough to show that
\[
\lambda_{\max}(G;L^{2}(Q))\leq 2k.
\]
Let $f\in L^{2}(Q)$ be an eigenvector for $\lambda_{\max}$, and
let $q\in Q$ be the point where the maximum of $|f|$ is attained.
Fix one edge $(i,j)\in G$ and examine 
\begin{equation}
((\id-(ij))f)(q)=f(q)-f((ij)q).\label{eq:oneij}
\end{equation}
If $q(i)=q(j)=1$ (recall that the elements $q$ of $Q$ are themselves
functions) then $(ij)q=q$ and (\ref{eq:oneij}) is zero. However,
by definition of $Q$ the number of $i$ such that $q(i)\neq1$ is
$\leq k$. Because the degree of $G$ is $\leq1$ we get that for
any $i$ there can be at most one $j$ such that $(i,j)\in G$ hence
there are a totality of no more than $k$ edges $(i,j)\in G$ for
which (\ref{eq:oneij}) is non-zero. Hence we get
\[
\sum_{(i,j)\in G}((\id-(ij))f)(q)\leq2k|f(q)|.
\]
Since $f$ is an eigenfunction of $\lambda_{\max}$, we also have \[
\sum_{(i,j)\in G}((\id-(ij))f)(q)=\lambda_{\max}f(q)\]
and the lemma is proved.
\end{proof}
\begin{lem}
\label{lem:onestar}Let $\sigma\in\mathcal{S}_{k}$ and let $G=\str_{n,l+1}$
i.e.~a star graph with $l$ edges and the rest of the vertices isolated. Then\[
\lambda_{\max}(G;\sigma)\leq l+k\]
\end{lem}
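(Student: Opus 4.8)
The plan is to diagonalise $\Delta_G$ outright using the Gelfand--Tsetlin basis rather than to imitate the pointwise‑maximum argument of Lemma \ref{lem:matching} (which, as I note below, does not adapt cleanly to the star). The graph $G=\str_{n,l+1}$ is exactly the star to which Lemma \ref{lem9} applies, with the ``$k$'' of that lemma equal to $l+1$; so the Gelfand--Tsetlin basis of $\sigma$ is already a basis of eigenvectors of $\Delta_G$, and it suffices to bound the largest eigenvalue that can appear.

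The only feature of $\mathcal{S}_k$ I would use is the following: if $\sigma=[\alpha]$ and the first row of $\alpha$ has $\ge n-k$ boxes, then the remaining rows of $\alpha$ contain $\le k$ boxes in total, each is nonempty, so $\alpha$ has at most $k+1$ rows. Moreover any Young diagram obtained from $\alpha$ by deleting boxes still has at most $k+1$ rows, since deleting a box (in a way that leaves a Young diagram) never increases the number of rows. Now I would invoke Lemma \ref{lem9}: the eigenvalue attached to the basis vector indexed by a chain $\alpha=\alpha_n,\alpha_{n-1},\dotsc,\alpha_1$ equals $l+y_{l+1}-x_{l+1}$, where $(x_{l+1},y_{l+1})$ is the column and row of the box removed in passing from $\alpha_{l+1}$ to $\alpha_l$. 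Since $\alpha_{l+1}$ is obtained from $\alpha$ by deleting boxes it has at most $k+1$ rows, so $y_{l+1}\le k+1$; trivially $x_{l+1}\ge 1$. Hence every eigenvalue of $\Delta_G$ on $\sigma$ is at most $l+(k+1)-1=l+k$, which is the claim. (The degenerate case $l=0$, where $G$ has no edges and $\Delta_G=0$, is trivial; for $l\ge 1$ we have $2\le l+1\le n$, so Lemma \ref{lem9} genuinely applies.)

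With Lemma \ref{lem9} in hand there is essentially no obstacle; the only point requiring a little care is the bound ``$\le k+1$ rows'', where what matters is that it is the number of \emph{rows} — not the number of boxes lying outside the first row — that controls the content $y-x$ of a removable box, and that this row bound is inherited all the way down the chain to $\alpha_{l+1}$. It is worth saying why one does not simply repeat the proof of Lemma \ref{lem:matching}: if $f\in L^2(Q(\sigma))$ is an eigenvector for $\lambda_{\max}$ and $q$ maximises $|f|$, then the term $f(q)-f((i,l+1)q)$ for a leaf $i$ of the star vanishes only when $q(i)=q(l+1)$, and when $q(l+1)\ne 1$ this can fail for as many as $l$ leaves, yielding only the useless bound $\lambda_{\max}\le 2l$. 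Passing to the Gelfand--Tsetlin basis is precisely what sidesteps this difficulty.
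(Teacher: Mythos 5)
Your proposal is correct and follows essentially the same route as the paper: invoke Lemma \ref{lem9} for the star $\str_{n,l+1}$, note that every diagram in the Gelfand--Tsetlin chain has at most $k+1$ rows so $y_{l+1}\le k+1$ and $x_{l+1}\ge 1$, and conclude $\lambda_{\max}\le l+(k+1)-1=l+k$. Your extra remarks (the $l=0$ case and why the $L^2(Q)$ pointwise argument of Lemma \ref{lem:matching} would only give $2l$) are sound but not needed beyond what the paper does.
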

\begin{proof}
This is a corollary of lemma \ref{lem9}.
The eigenvector for the maximal eigenvalue corresponds to an element
of the Gelfand-Tsetlin basis, which corresponds to a sequence of boxes
$(x_i,y_i)$ of $\sigma$ as in lemma \ref{lem9}. Since $\sigma$ has no
more than $k+1$ rows, we have $y_i \leq k+1$ for all $i$, and we
always have $x_i \geq 1$. According to lemma \ref{lem9},
\[
\lambda_{\max}(G;\sigma)=((l+1)-1) + (y_{l+1}-x_{l+1})\leq l+k+1-1=l+k.
\qedhere\]
\end{proof}
\begin{lem}
\label{lem:weightedstar}Let $A$ be a weighted star, i.e.\ assume
there are some $a_{2}\geq\dotsb\geq a_{n}\geq0$ such that $(1,i)$
has weight $a_{i}$ but $(i,j)$ has weight $0$ when both $i>1$
and $j>1$. Let $\sigma\in\mathcal{S}_{k}$. Then\[
\lambda_{\max}(A;\sigma)\leq2a_{2}+\dotsb+2a_{k+1}+a_{k+2}+\dotsb+a_{n}.\]
 
\end{lem}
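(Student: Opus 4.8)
The plan is to reduce the weighted star to a sum of the unweighted stars $\str_{n,l+1}$ whose maximal eigenvalues we already control by Lemma \ref{lem:onestar}, and then exploit the fact that all these stars share the Gelfand--Tsetlin basis. Order the weights $a_2 \ge a_3 \ge \dotsb \ge a_n \ge 0$ and write the weighted star $A$ as a non-negative combination of the nested stars: setting $b_l = a_{l+1} - a_{l+2}$ for $2 \le l \le n-1$ and $b_n = a_n$ (so each $b_l \ge 0$), one has $A = \sum_{l=1}^{n-1} b_{l+1}\,\str_{n,l+1}$ where $\str_{n,l+1}$ is the star on vertex $1$ joined to $2,\dotsc,l+1$. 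Indeed the edge $(1,i)$ appears in exactly the stars $\str_{n,j+1}$ with $j \ge i-1$, and $\sum_{j \ge i-1} b_{j+1} = a_i$ telescopes correctly. (One must double-check the exact indexing against the paper's convention $\str_{n,k} = K_{n,k} - K_{n,k-1}$, but this is routine bookkeeping.)

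Now the key point: by Lemma \ref{lem9}, for each fixed $l$ the Gelfand--Tsetlin basis of $\sigma$ diagonalizes $\Delta_{\str_{n,l+1}}$, and this basis does not depend on $l$. Hence it simultaneously diagonalizes $\Delta_A = \sum_l b_{l+1}\Delta_{\str_{n,l+1}}$, and the eigenvalue of $\Delta_A$ on the basis vector indexed by a removal sequence with boxes $(x_i,y_i)$ is $\sum_{l} b_{l+1}\big(l + (y_{l+1}-x_{l+1})\big)$. Since $\sigma \in \mathcal{S}_k$ has at most $k+1$ rows we have $1 \le x_i$ and $1 \le y_i \le k+1$, so each summand is at most $b_{l+1}(l + k)$ and is also trivially at most $2 l \, b_{l+1}$ is \emph{not} quite what we want — rather I will bound each term by $b_{l+1}\cdot\min(2l,\, l+k)$, using $y_{l+1}-x_{l+1} \le k+1-1 = k$ in general and $y_{l+1}-x_{l+1} \le l$ always (as the removed box lies in a diagram built from $l+1$ boxes, so its column index is at most $l+1$, giving $y-x \le l$; more simply $\lambda_{\max}(\str_{n,l+1};\sigma) \le 2\wt(\str_{n,l+1}) = 2l$ by \eqref{eq:triv}). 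Thus
\[
\lambda_{\max}(A;\sigma) \;\le\; \sum_{l=1}^{n-1} b_{l+1}\,\min(2l,\, l+k).
\]
For $l \le k$ we use $2l$, for $l \ge k$ we use $l+k$; substituting $b_{l+1} = a_{l+1}-a_{l+2}$ (with $b_n = a_n$) and summing by parts (Abel summation) collapses this to $2a_2 + \dotsb + 2a_{k+1} + a_{k+2} + \dotsb + a_n$, which is the claimed bound. The coefficient of $a_i$ for $i \le k+1$ comes out to $2$ because $\min(2l,l+k)$ increases in steps of $2$ while $l \le k$; for $i \ge k+2$ it comes out to $1$ because the increments become $1$.

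The only genuine subtlety — and hence the step I would be most careful about — is getting the telescoping bookkeeping exactly right: verifying the decomposition $A = \sum b_{l+1}\str_{n,l+1}$ with the paper's precise indexing of $\str_{n,k}$, and then carrying out the Abel summation so that the transition from coefficient $2$ to coefficient $1$ lands precisely at the index $k+1 \mid k+2$ boundary. Everything else is a direct consequence of Lemma \ref{lem9} (simultaneous diagonalizability in the Gelfand--Tsetlin basis) combined with the a priori row-count bound $y_i \le k+1$ for $\sigma \in \mathcal{S}_k$; no new representation-theoretic input is needed.
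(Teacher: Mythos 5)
Your overall skeleton --- decompose the weighted star into nested constant-weight stars, bound each one by $\min(2l,\,l+k)$ using Lemma \ref{lem9}/\ref{lem:onestar} together with the trivial bound (\ref{eq:triv}), then sum by parts --- is exactly the paper's argument, and the off-by-one in your telescoping is indeed only bookkeeping. The genuine problem is the step you use to combine the per-star bounds: the Gelfand--Tsetlin basis does \emph{not} simultaneously diagonalize the stars in your decomposition. Lemma \ref{lem9} applies to $\str_{n,l+1}$ as the paper defines it, namely the star centered at the vertex $l+1$ and joined to $1,\dotsc,l$ (a Jucys--Murphy element); these do pairwise commute and share the GT eigenbasis, but any non-negative combination of them is a quasi-complete graph, which is never a weighted star with non-constant weights, so they cannot reconstitute the matrix $A$ of the lemma. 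The stars you actually sum --- all centered at vertex $1$, joined to $2,\dotsc,l+1$ --- are each individually conjugate to a Jucys--Murphy star, but by \emph{different} permutations, and they do not commute: already $(12)$ and $(12)+(13)$ fail to commute in $\RR[S_n]$, since $(12)(13)\neq(13)(12)$. Hence there is no common eigenbasis, $\sigma(\Delta_A)$ for a genuinely weighted star is in general not diagonal in the GT basis, and the term-by-term eigenvalue formula $\sum_l b_{l+1}\bigl(l+(y_{l+1}-x_{l+1})\bigr)$ you write down is unjustified (and false in general).

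The gap is easily repaired, and the repair is what the paper does: each $\sigma(\Delta)$ is positive semidefinite, so $\lambda_{\max}$ is its operator norm and is therefore subadditive. This gives $\lambda_{\max}(A;\sigma)\le\sum_l b_l\,\lambda_{\max}(S_l;\sigma)$, where $S_l$ is the unweighted star with $l$ edges centered at $1$; by relabeling (conjugation) invariance $\lambda_{\max}(S_l;\sigma)=\lambda_{\max}(\str_{n,l+1};\sigma)\le l+k$ by Lemma \ref{lem:onestar}, while (\ref{eq:triv}) gives the bound $2l$, so each coefficient is multiplied by $\min(2l,\,l+k)$ after all. With this substitute for the diagonalization step, your Abel summation goes through verbatim and yields the stated inequality; no simultaneous eigenbasis is needed, only the norm triangle inequality.
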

\begin{proof}
Write $A=A_{2}+\dotsb+A_{n}$ where $A_{i}$ is a weighted star with
weights
\begin{align*}
 & \overbrace{a_{i}-a_{i+1},\dotsc,a_{i}-a_{i+1}}^{i-1\mbox{ times}},\!\!\!\overbrace{0,\dotsc,0}^{n-i\mbox{ times}} & i & =2,\dotsc,n-1\\
 & \overbrace{a_{n},\dotsc,a_{n}}^{n-1\mbox{ times}} & i & =n.
\end{align*}
Since $\lambda_{\max}$ is a norm (recall that $\sigma(\Delta_A)$ is a
positive matrix so $\lambda_{\max}$ is its norm as an $l^2$ operator),
we have that
\[
\lambda_{\max}(A;\sigma)\le\sum_{i=2}^{n}\lambda_{\max}(A_{i};\sigma).
\]
Each summand may be estimated by lemma \ref{lem:onestar}, and we
get\[
\lambda_{\max}(A_{i};\sigma)\leq(i-1+k)(a_{i}-a_{i+1}).\qquad\mbox{(define }a_{n+1}:=0\mbox{)}\]
However, for $i\leq k$ we actually get a better estimate from the
trivial bound (\ref{eq:triv}),\[
\lambda_{\max}(A_{i};\sigma)\leq2(i-1)(a_{i}-a_{i+1}).\]
Summing we get
\begin{align*}
\lambda_{\max}(A;\sigma) & \leq\sum_{i=2}^{k}2(i-1)(a_{i}-a_{i+1})+\sum_{i=k+1}^{n}(i-1+k)(a_{i}-a_{i+1})=\\
 & =\sum_{i=2}^{k+1}2a_{i}+\sum_{i=k+2}^{n}a_{i}
\end{align*}
as was to be proved.
\end{proof}

\begin{lem}\label{reduce}Let $A,H$ be two weighted graphs with $n$ vertices, and let $\sigma, \tau$ be two irreducible representations of $S_n$. If 
\begin{equation} \lambda_1(H;\tau)\geq \lambda_{\max}(H;\sigma) \label{eq:reduce}
\end{equation} and
$$ \lambda_1(A;\tau) \geq \lambda_1(A;\sigma)$$ then
$$ \lambda_1(A+H;\tau) \geq \lambda_1(A+H;\sigma).$$ \end{lem}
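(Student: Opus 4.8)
The plan is to use the variational (min-max) characterization of eigenvalues together with the fact that $\sigma(\Delta_A)$, $\tau(\Delta_A)$ etc.\ are all positive-semidefinite, so that adding weighted graphs corresponds to adding positive operators. Concretely, for any weighted graph $A$ and representation $\rho$, one has $\lambda_1(A;\rho) = \min_{v\neq 0} \langle \rho(\Delta_A)v,v\rangle/\langle v,v\rangle$ and $\lambda_{\max}(A;\rho) = \max_{v\neq 0} \langle \rho(\Delta_A)v,v\rangle/\langle v,v\rangle$. Since $\Delta_{A+H} = \Delta_A + \Delta_H$ and $\rho$ is linear, we get $\rho(\Delta_{A+H}) = \rho(\Delta_A) + \rho(\Delta_H)$ as matrices. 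The difficulty is of course that the minimizing vector for $A+H$ in $\tau$ need not be the minimizing vector for $A$ or for $H$ separately, so one cannot simply add the two hypothesized inequalities.

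The key idea to get around this is the following two-sided estimate. Let $w$ be a unit eigenvector realizing $\lambda_1(A+H;\tau)$ in the representation $\tau$. Then
\[
\lambda_1(A+H;\tau) = \langle \tau(\Delta_A)w,w\rangle + \langle \tau(\Delta_H)w,w\rangle \ge \lambda_1(A;\tau) + \lambda_1(H;\tau),
\]
using the variational lower bound for each positive operator $\tau(\Delta_A)$ and $\tau(\Delta_H)$ separately. On the other side, let $u$ be a unit eigenvector realizing $\lambda_1(A+H;\sigma)$ in $\sigma$; since $\sigma(\Delta_H)$ is positive-semidefinite we have the reverse-type bound
\[
\lambda_1(A+H;\sigma) = \langle \sigma(\Delta_A)u,u\rangle + \langle \sigma(\Delta_H)u,u\rangle \le \lambda_1(A;\sigma) + \lambda_{\max}(H;\sigma).
\]
Wait — the first term here needs care: $\langle \sigma(\Delta_A)u,u\rangle \ge \lambda_1(A;\sigma)$, which is the wrong direction for an upper bound. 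So I would instead bound differently: $\langle\sigma(\Delta_A)u,u\rangle \le \lambda_1(A+H;\sigma)$ is automatic, but that is circular. The correct route is to keep $A$ paired with its own minimizer only on the $\tau$ side, and on the $\sigma$ side to use that the \emph{same} $u$ gives $\langle\sigma(\Delta_A)u,u\rangle$ which is at least $\lambda_1(A;\sigma)$ but we want to compare $\lambda_1(A+H;\sigma)$ to things we control.

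So the cleaner argument: Let $u$ be the minimizing unit vector for $\sigma(\Delta_{A+H})$. Then
\[
\lambda_1(A+H;\sigma) = \langle\sigma(\Delta_A)u,u\rangle + \langle\sigma(\Delta_H)u,u\rangle \ge \langle\sigma(\Delta_A)u,u\rangle \ge \lambda_1(A;\sigma),
\]
and also $\lambda_1(A+H;\sigma)\le \lambda_{\max}(H;\sigma) + \langle\sigma(\Delta_A)u,u\rangle$ — still not closed. The honest fix is to split the inequality we want, $\lambda_1(A+H;\tau)\ge\lambda_1(A+H;\sigma)$, as follows. Pick $u$ minimizing for $\sigma(\Delta_{A+H})$. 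Then
\begin{align*}
\lambda_1(A+H;\sigma) &= \langle\sigma(\Delta_A)u,u\rangle + \langle\sigma(\Delta_H)u,u\rangle\\
&\le \langle\sigma(\Delta_A)u,u\rangle + \lambda_{\max}(H;\sigma)\\
&\le \langle\sigma(\Delta_A)u,u\rangle + \lambda_1(H;\tau),
\end{align*}
using (\ref{eq:reduce}). It remains to show $\langle\sigma(\Delta_A)u,u\rangle + \lambda_1(H;\tau) \le \lambda_1(A+H;\tau)$, i.e.\ $\langle\sigma(\Delta_A)u,u\rangle \le \lambda_1(A+H;\tau) - \lambda_1(H;\tau)$. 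But $\lambda_1(A+H;\tau)\ge \lambda_1(A;\tau)+\lambda_1(H;\tau)$ from the $\tau$-side variational split above, so it suffices that $\langle\sigma(\Delta_A)u,u\rangle \le \lambda_1(A;\tau)$. Now $\langle\sigma(\Delta_A)u,u\rangle$ could be as large as $\lambda_{\max}(A;\sigma)$, so this needs the hypothesis $\lambda_1(A;\tau)\ge\lambda_1(A;\sigma)$ — but that bounds $\lambda_1(A;\tau)$ from below by $\lambda_1(A;\sigma)$, not by $\langle\sigma(\Delta_A)u,u\rangle$. Hence the main obstacle is genuinely that $u$ is not the minimizer of $\sigma(\Delta_A)$.

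The resolution — and the step I expect to be the crux — is to instead argue by picking the minimizer of $A$ rather than of $A+H$ and using a perturbation/monotonicity argument, or equivalently to prove the contrapositive-style chain: from $\lambda_1(A;\tau)\ge\lambda_1(A;\sigma)$ choose a unit vector $u_0$ with $\langle\sigma(\Delta_A)u_0,u_0\rangle = \lambda_1(A;\sigma)$, then estimate $\lambda_1(A+H;\sigma)\le\langle\sigma(\Delta_{A+H})u_0,u_0\rangle = \lambda_1(A;\sigma) + \langle\sigma(\Delta_H)u_0,u_0\rangle \le \lambda_1(A;\sigma) + \lambda_{\max}(H;\sigma) \le \lambda_1(A;\tau) + \lambda_1(H;\tau) \le \lambda_1(A+H;\tau)$, where the last inequality is the $\tau$-side variational split $\lambda_1(A+H;\tau)\ge\lambda_1(A;\tau)+\lambda_1(H;\tau)$ applied to a common minimizing vector for $A+H$ in $\tau$. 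This chain only uses: the Rayleigh-quotient upper bound on $\lambda_1(A+H;\sigma)$ via a fixed test vector; positivity of $\sigma(\Delta_H)$ to bound its Rayleigh quotient by $\lambda_{\max}(H;\sigma)$; hypothesis (\ref{eq:reduce}) to replace $\lambda_{\max}(H;\sigma)$ by $\lambda_1(H;\tau)$; the hypothesis on $A$; and positivity of $\tau(\Delta_H)$ and $\tau(\Delta_A)$ for the final variational split. I would write out exactly this four-line chain of inequalities, stating explicitly at each step which of the three ingredients (variational characterization, positive-semidefiniteness, or a hypothesis) is being invoked, and that completes the proof.
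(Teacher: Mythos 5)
Your final chain of inequalities is exactly the paper's argument: test the Rayleigh quotient of $\sigma(\Delta_{A+H})$ with the unit minimizer of $\sigma(\Delta_A)$, bound the $H$-term by $\lambda_{\max}(H;\sigma)$, invoke the two hypotheses, and finish with the superadditivity $\lambda_1(A+H;\tau)\ge\lambda_1(A;\tau)+\lambda_1(H;\tau)$ from the variational characterization applied in $\tau$. The exploratory detours at the start are harmless; the proof you settle on is correct and essentially identical to the paper's.
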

\begin{proof}
Recall the variational characterization of $\lambda_1$ which states
that for any positive matrix $M$, its lowest eigenvalue is the minimum
over all vectors $v$ of $\langle Mv,v\rangle$. Hence we may upper
bound $\lambda_1(A+H;\sigma)$ with any $v$, and we choose $v$ to be a unit eigenvector corresponding to $\lambda_1(A;\sigma)$. Then
\begin{align*} 
\lambda_1(A+H;\sigma) & \leq \langle(\Delta_A+\Delta_H)v,v\rangle= \\
&=\lambda_1(A;\sigma)+\langle \Delta_Hv,v\rangle \leq \lambda_1(A;\sigma)+\lambda_{\max}(H;\sigma) \leq\\
&\leq \lambda_1(A;\tau)+ \lambda_1(H;\tau) \leq 
\lambda_1(A+H;\tau).\qedhere
\end{align*}
\end{proof}
We remark that even though lemma \ref{reduce} works for any matrix $H$, we
will apply it only to matrices whose entries take two values (one of
which is 0) i.e.~to graphs whose weights are all the same.
\begin{definition} Let $G,H$ be two weighted graphs, and $\sigma, \tau$ representations of $S_n$. \begin{enumerate}
\item
We call $H$ a reducing graph for $\sigma, \tau$ if $H$ satisfies (\ref{eq:reduce}).
\item
We say that $G$ is $H$-irreducible if there does not exist a graph $H'$ isomorphic to $H$ and a number $\epsilon >0$ such that $\epsilon H' \leq G$.
\end{enumerate}
\end{definition}

The identities (\ref{eq:reverse}) and (\ref{eq:dual}) show that equation (\ref{eq:reduce}) is equivalent to $$ \lambda_{\max}(H;\sigma) + \lambda_{\max}(H;\tau\otimes\sgnrep) \leq 2\wt(H). $$
We will use this reformulation to show that a given graph $H$ is reducing.

Lemma \ref{reduce} is the basis to our strategy of reduction: In proving that $\sigma \succ \tau$, if $H$ is a reducing graph, it is enough to prove that $\lambda_1(A;\sigma) \leq \lambda_1(A;\tau)$ only for $H$-irreducible matrices $A$. Indeed, if $A$ is not $H$-irreducible, then we can find $H' \cong H$ and $\epsilon>0$ such that $A-\epsilon H'$ has nonnegative weights on the edges, and fewer nonzero weights than $A$. According to lemma \ref{reduce}, it is enough to prove the inequality for $A-\epsilon H'$. Repeating this procedure, we reduce the problem to $H$-irreducible graphs.


\begin{proof}[Proof of the theorem]
Let $n\ge 4k^2+4k$ and let $\sigma\in\mathcal{S}_k$ and $\tau\in\mathcal{S}_k\otimes\sgnrep$. The claim of
the theorem is that under these conditions $\sigma\succ\tau$.
Let $H$ be the graph with $2k$ disjoint edges i.e.~as a matrix its
coefficients $h_{ij}$ are given by
\[
h_{ij}=\begin{cases}
1&\mbox{$i=2k$ and $j=2k+1$ for some $k$}\\
0&\mbox{otherwise.}
\end{cases}
\]
By lemma \ref{lem:matching},
$$
\lambda_{\max}(H; \tau\otimes\sgnrep)+\lambda_{\max}(H;\sigma)\leq
4k=2\wt(H). 
$$
Hence, $H$ is a reducing graph for $\sigma$ and $\tau$.
It is hence enough to prove that $\lambda_1(A,\sigma) \leq \lambda_1(A, \tau)$ for any $H$-irreducible matrix $A$.

It is well known that an $H$-irreducible graph can be written as a union of $4k-2$ weighted stars. Indeed, choose
an edge $e$ of $A$ arbitrarily and remove from $A$ the two stars
centered at the two vertices of $e$. If the resulting graph is non-empty,
choose again some edge arbitrarily and remove two stars. This process
must stop after $2k-1$ steps, since otherwise we would have found $2k$
disjoint edges in $A$. Hence we wrote $A$ as a union of $4k-2$
stars. Denote 
\[
A=\sum_{i=1}^{4k-2}S_{i}.
\]
We now use lemma \ref{lem:weightedstar} for each of the $S_i$ and sum
over $i$. Recall that the $k$ edges with the largest weights of a
weighted star play a special role in lemma \ref{lem:weightedstar} ---
their weights were multiplied by 2 rather than by 1. Collecting these
special edges for the $4k-2$ stars gives a total of $k(4k-2)$ special
edges. Denote them by $e_{i}$. Thus the conclusion of lemma
\ref{lem:weightedstar} is
\[
\lambda_{\max}(A,\tau\otimes\sgnrep)\leq
  \sum_{i<j}a_{ij}+\sum_{i=1}^{k(4k-2)}a_{e_{i}}
\]
(where if $e=(i,j)$ then we denote $a_{e}=a_{ij}$). The edges $e_{i}$
combined have no more than $(k+1)\cdot(4k-2)=4k^2+2k-2$ vertices.

Let us now move to the estimate of $\lambda_1(A;\sigma)$. For that matter,
pick $2k$ vertices which do not belong to any of the $e_{i}$'s (here we use the condition $n\geq4k^{2}+4k$). Denote these vertices by $v_{1},\dotsc,v_{2k}$.
For every $i$, let $W(i)$ be the weight of $v_{i}$ i.e.\[
W(i)=\sum_{j=1}^{n}a_{v_{i}j}.\]
Assume w.l.o.g.\ that the $v_{i}$ are arranged so that $W(i)$ are
increasing. Examine again the representation $L^{2}(Q)$ from lemma
\ref{lem:Q}. By clause (2) of that lemma, since $\sigma \in
\mathcal{S}_{k}$, for any set of $n-k$ vertices there exists a nonzero
element $f\in V\subset L^{2}(Q)$, where $V$ is an invariant subspace
of $L^2(Q)$ isomorphic to $\sigma$, such that $f$ is invariant to
permutations of elements from this set. Normalize $f$ to have $||f||=1$.
We choose the set to be all vertices except
$v_{1},\dotsc,v_{k}$. Examine now 
\[
\sum_{i<j}a_{ij}(\id-(ij))f.
\]
If both $i$ and $j$ are different from $v_{1},\dotsc,v_{k}$ then
$(ij)f=f$ and the contribution to the sum is $0$. Otherwise we simply
estimate\[
||a_{ij}(\id-(ij))f||\leq2a_{ij}\]
(here $||\cdot ||$ is the norm in $L^2(Q)$) and we get
\[
\left\Vert \sum_{i<j}a_{ij}(\id-(ij))f\right\Vert \leq2\sum_{i=1}^{k}W(i)\leq\sum_{i=1}^{2k}W(i)\]
where the second inequality comes from the fact that we chose the
$W(i)$ increasing. This bounds $\lambda_1(A;\sigma)$ and we get
\[
\lambda_1(A;\sigma)+\lambda_{\max}(A;\tau\otimes\sgnrep)\leq\sum_{i=1}^{2k}W(i)+\sum_{i<j}a_{ij}+\sum_{i=1}^{k(4k-2)}a_{e_{i}}\]
Since the vertices $v_{1},\dotsc,v_{2k}$ are different from the vertices on the edges $e_{i}$, the above sum contains each edge no more than twice. Hence,
\[ \lambda_1(A; \sigma) \leq 2\wt(A)-\lambda_{\max} (A; \tau\otimes\sgnrep) = \lambda_1(A;\tau). \qedhere\]
\end{proof}

\appendix

\section{\label{sec:gamma}The hook-shaped diagrams}

In this appendix we prove the claim appearing in the introduction that 
\begin{equation}\label{eq:Bacher}
[n]\succ [n-1,1] \succ \dotsb \succ [1^n].
\end{equation}
This is basically a result of Bacher \cite{B94}, 
who showed that the eigenvalues $\lambda_i(A;[n-k,1^k])$ are simply
all the sums of all $k$-tuples of the eigenvalues
$\lambda_i(A;[n-1,1])$. This immediately implies (\ref{eq:Bacher})
(recall that the eigenvalues are all non-negative). However, he used a
different description of these representations, as 
wedge products of $[n-1,1]$. We will now prove that the two
descriptions coincide. This was known before (for example, it is
mentioned without proof in \cite{CLR09} in the penultimate paragraph
of the introduction) but we found no proof in the literature.

\begin{lem} $ \wedge^k[n-1,1]\cong[n-k,1^k] $
\end{lem}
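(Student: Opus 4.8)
The plan is to identify $\wedge^k[n-1,1]$ as a concrete subspace of a tensor power and then decompose it using characters or a direct construction. Here is the approach I would take.

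\medskip

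\noindent\textbf{Approach.} Recall that $[n-1,1]$ is realized as the standard representation $\std$, the subspace of $\R^n$ (with $S_n$ acting by permuting coordinates $e_1,\dots,e_n$) consisting of vectors whose coordinates sum to zero. First I would work instead with the full permutation representation $\R^n = [n]\oplus\std$, since $\wedge^k\R^n$ is easy to describe: it has basis $\{e_{i_1}\wedge\dots\wedge e_{i_k} : i_1<\dots<i_k\}$, indexed by $k$-subsets of $\{1,\dots,n\}$, with $S_n$ acting by permuting subsets (with a sign). Using $\wedge^k(U\oplus W)\cong\bigoplus_{a+b=k}\wedge^a U\otimes\wedge^b W$ and the fact that $\wedge^0[n]=[n]$ (trivial), $\wedge^1[n]=[n]$, $\wedge^{\geq 2}[n]=0$, I get
\[
\wedge^k\R^n \;\cong\; \wedge^k\std \;\oplus\; \wedge^{k-1}\std .
\]
So it suffices to show that the $S_n$-representation on $k$-subsets, call it $M_k$, decomposes as $\bigoplus_{j=0}^{k}[n-j,1^j]$ (possibly twisted by sign on the $e_i$'s — I would check whether the sign action matters; permuting the basis of $\wedge^k\R^n$ does introduce signs, so I should be a little careful, but the signs only reshuffle basis vectors within $M_k$ up to an overall isomorphism of representations, OR I can sidestep this by using characters). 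Granting this, an induction on $k$ together with the displayed isomorphism pins down $\wedge^k\std\cong[n-k,1^k]$.

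\medskip

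\noindent\textbf{Carrying it out via characters.} Rather than fuss with the sign subtlety, the cleanest route is: (i) compute the character $\chi_{\wedge^k\std}$ and the character of $[n-k,1^k]$ and check they agree. The character of $\wedge^k\rho$ at $g$ is the $k$-th elementary symmetric polynomial in the eigenvalues of $\rho(g)$; for $\rho=\std$ and $g$ of cycle type $\mu$, the eigenvalues are explicit (they are the nontrivial ones among the union, over cycles, of roots of unity), so $\chi_{\wedge^k\std}(g)$ can be read off. On the other side, the Murnaghan–Nakayama rule (or the known branching structure used already in the Gelfand–Tsetlin discussion, §3) computes $\chi_{[n-k,1^k]}$: hook shapes are exactly the partitions for which Murnaghan–Nakayama has a clean closed form. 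Matching the two is a finite symmetric-function identity. Alternatively, and perhaps more in the spirit of the paper, I would use lemma \ref{lem:sum} / lemma \ref{lem:Q}: show $\wedge^k\std$ embeds in $L^2(Q([n-k,k]))$ hence every irreducible component $[\lambda]$ of it satisfies $\lambda\trianglerighteq[n-k,k]$; combined with a dimension count ($\dim\wedge^k\std=\binom{n-1}{k}=\dim[n-k,1^k]$, which one checks from the hook-length formula) and the fact that $[n-k,1^k]$ does occur (exhibit an explicit highest-weight-type vector, e.g. an antisymmetrization $\sum_{w}\sgn(w)\,e_{w(1)}\wedge\dots\wedge e_{w(k)}$ over an appropriate column group), this forces $\wedge^k\std\cong[n-k,1^k]$.

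\medskip

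\noindent\textbf{Main obstacle.} The genuinely delicate point is the bookkeeping of signs: the natural action of $S_n$ on $\wedge^k\R^n$ permutes the wedge-basis \emph{with signs}, so $M_k$ is a priori a signed permutation module, not literally the permutation module on $k$-subsets. I expect the resolution to be routine — either the signs can be absorbed by a change of basis showing $M_k$ is isomorphic to the honest permutation module on $k$-subsets, or one simply never passes through $M_k$ and argues entirely with characters/dimensions as above. The decomposition of the $k$-subset module $M_k\cong\bigoplus_{j\le k}[n-j,1^j]$ itself is classical but I would still want to either cite it cleanly or derive it from lemma \ref{lem:Q} plus dimension counts, so the write-up stays self-contained. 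Everything else (the $\wedge^k(U\oplus W)$ identity, the induction, the hook-length dimension computation) is standard and short.
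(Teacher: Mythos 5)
Your first route (compare $\chi_{\wedge^k\std}$ with $\chi_{[n-k,1^k]}$, using $\wedge^k\R^n\cong\wedge^k\std\oplus\wedge^{k-1}\std$ and Murnaghan--Nakayama for hooks) is indeed the paper's route, but you stop exactly where the work begins: ``matching the two is a finite symmetric-function identity'' is the entire content of the lemma, not a remark. The paper carries it out by induction on $k$: it computes $\chi_{\wedge^k\R^n}(g)$ on the subset basis $e_K$ (only $g$-invariant $K$ contribute, with sign $\sgn(g|_K)$), getting $\sum(-1)^{\sum\epsilon_i(c_i-1)}$ over $\epsilon\in\{0,1\}^r$ with $\sum\epsilon_ic_i=k$, and then proves $\chi_k^\Gamma+\chi_{k-1}^\Gamma$ equals this by building a sign-reversing bijection between the Murnaghan--Nakayama border-strip sequences for $[n-k,1^k]$ and $[n-k+1,1^{k-1}]$ (the map $F$ induced by shifting the hook), which cancels all sequences except those corresponding to the $\epsilon$'s. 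Nothing playing this role appears in your sketch, so as written the proof has a hole precisely at its core.

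The fallback routes you offer do not close the gap; they contain errors. (a) The signs cannot be ``absorbed'': $\wedge^k\R^n$ is \emph{not} isomorphic to the honest permutation module on $k$-subsets --- the latter decomposes into two-row shapes $\bigoplus_{j\le k}[n-j,j]$, not hooks (for $n=3$, $k=2$: $\wedge^2\R^3\cong[2,1]\oplus[1^3]$, while the $2$-subset module is $\cong\R^3\cong[3]\oplus[2,1]$). Relatedly, the decomposition of your signed module $M_k$ into hooks is essentially the statement being proved, so it cannot be quoted. (b) In the lemma \ref{lem:Q} route, $\wedge^k\std$ does \emph{not} embed in $L^2(Q([n-k,k]))$ (that is again the $k$-subset module), and the logic is internally inconsistent: if every component dominated $[n-k,k]$, then $[n-k,1^k]$, which is strictly dominated by $[n-k,k]$, could not occur at all. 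The relevant module is $L^2(Q([n-k,1^k]))$ (ordered distinct $k$-tuples), into which $\wedge^k\R^n$ does embed; then the dimension count $\binom{n-1}{k}$ plus an explicit copy of the hook inside $\wedge^k\std$ would suffice. But the vector you propose, $\sum_w\sgn(w)\,e_{w(1)}\wedge\dotsb\wedge e_{w(k)}$, is just $k!\,e_1\wedge\dotsb\wedge e_k$ (the wedge is already antisymmetric) and does not even lie in $\wedge^k\std$; exhibiting a genuine $[n-k,1^k]$-vector (say by applying the Young symmetrizer $c_\tau$ for $\tau=[n-k,1^k]$ and verifying nonvanishing) is a nontrivial step you would still have to carry out.
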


The proof will use the Murnaghan-Nakayama formula for the characters of
the irreducible representations of $S_n$. See e.g.~\cite[Theorem 4.10.2]{S00}. We will not give
a full description of this formula here. 

\begin{proof}
Recall that for a representation $\rho$ the character is a function
$\chi:G\to \C$ defined by $\chi(g)=\tr \rho(g)$ and that two
representations are isomorphic if and only if their characters
coincide \cite[Corollary 1.9.4(5)]{S00}. Thus it is enough to prove that the two representations have the same character.
Let us denote the character of the representation on the left hand
side by $\chi_k^\wedge $ and the right hand side by $\chi_k^\Gamma$
(the letter $\Gamma$ reminds us of a hook). We will prove that 
$\chi_k^\wedge+\chi_{k-1}^\wedge=\chi_k^\Gamma+\chi_{k-1}^\Gamma$.
Since the lemma is true for $k=1$, that will be enough.

Let $V$ be the $n$-dimensional Euclidean space, with the standard
basis $e_1,..,e_n$, viewed as the standard representation of $S_n$
(see page \pageref{pg:stdrep}). Since 
$V \cong [n-1,1] \oplus [n]$, and $[n]$ is one dimensional, it can be
easily seen that 
$$\wedge^k V \cong \wedge^{k-1}[n-1,1] \oplus \wedge^k[n-1,1].$$ 
Therefore $\chi_{\wedge^kV}=\chi_k^\wedge + \chi_{k-1}^\wedge$. Recall now the
standard basis for $\wedge^k V$: for any
subset $K=\{i_1<i_2<\dotsb<i_k\}$ of $\{1,2,\dotsc,n\}$ let
$e_K=e_{i_1} \wedge \dotsb \wedge e_{i_k}$. We will calculate
the trace of a permutation $g$ (acting on $\wedge^k V$) using this
basis. Let $g\in S_n$ have cycles of lengths $c_1\geq c_2 \geq \dotsb \geq c_r$. We have $ge_K=\pm e_{g(K)}$. 
The only
contribution to the trace comes from $e_K$'s for which $g(K)=K$, and
in this case the $\pm$ above is simply $\sgn(g|_K)$. Hence
we get 
$$ \chi_{\wedge^kV}(g)=\sum_{\epsilon_1,\dotsc,\epsilon_r \in \{0,1\}: \sum \epsilon_i c_i = k} (-1)^{\sum \epsilon_i (c_i-1)}. $$
Let us now calculate $\chi_k^\Gamma+\chi_{k-1}^\Gamma$. We use the Murnaghan-Nakayama rule, which takes a nice form for hook-shaped diagrams:
For a hook-shaped diagram $\gamma$, let $\bS(\gamma)$ be the
set of all sequences of Young diagrams
$\gamma=\gamma_1,\dotsc,\gamma_{r+1}=\emptyset$ such that for $1 \leq
i\leq r$, $\gamma_{i+1}$ is obtained from $\gamma_i$ by removing $c_i$
consecutive boxes. Note that except for the last stage, all the
removed parts do not contain the corner $(1,1)$ and hence are either
horizontal or vertical bars. We will call the removed parts of the
$r-1$ first stages the \emph{bars of the sequence}. For any set of
consecutive boxes we define its \emph{height} to be the number of rows it
occupies. And for an element $(\gamma_1,\dotsc,\gamma_{r+1})$ of
$\bS(\gamma)$, we define its height to be $\sum_{i=1}^r
(1+\height(\gamma_i \setminus \gamma_{i+1}))$. Then, according to the
Murnaghan-Nakayama rule, 
$$ \chi_k^\Gamma(g) = \sum_{s\in \bS([n-k,1^k])} (-1)^{\height(s)}. $$
Let $f$ be the bijective function from the Young
diagram $[n-k,1^k]$ to the Young diagram $[n-k+1,1^{k-1}]$ taking the
box at position $(1,i)$ to the box $(1,i-1)$ for all $i>1$, and taking
the box at $(i,1)$ to $(i+1,1)$ for all $i$. See figure \ref{fig:f}.
\begin{figure}
\input{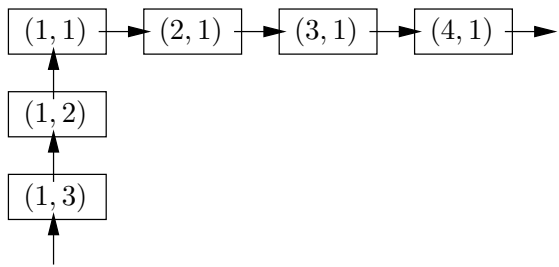}
\caption{\label{fig:f}The mapping $f:[n-k,1^k]\to[n-k+1,1^{k-1}]$.}
\end{figure} 
Let $\bA$ be the subset of $\bS([n-k,1^k])$ of all sequences for which
no bar has $(1,2)$ as an endpoint, and let $\bB$ be the subset of
$\bS([n-k+1,1^{k-1}])$ of all sequences for which no bar has $(2,1)$
as an endpoint. Let $F:\bA\rightarrow \bB$ be the function defined by
applying $f$ to each stage of the sequence. Then $F$ is well defined
because the condition that $(1,2)$ is not an end point ensures that
the image is indeed a set of legal Young diagrams; and $F$ is a bijection, since an inverse function can be defined using $f^{-1}$. Moreover, the corresponding terms for $s$ and $F(s)$ cancel out in the sum for $\chi_k^\Gamma+\chi_{k-1}^\Gamma$.
Hence, $$\chi_k^\Gamma+\chi_{k-1}^\Gamma= \sum_{s\in
  \bS([n-k,1^k])\setminus \bA} (-1)^{\height(s)}+\sum_{s\in
  \bS([n-k+1,1^{k-1}])\setminus \bB} (-1)^{\height(s)}.$$ But this is equal
to $\chi_{_V}(g)$: A sequence $\epsilon_1,..,\epsilon_r\in \{0,1\}$
such that $\sum \epsilon_i c_i=k$ determines a unique term in one of
the above two summands in the following way: at each stage, we remove
a vertical bar of size $c_i$ if $\epsilon_i=1$ and a horizontal one if
$\epsilon_i=0$. We get a sequence of Young
diagrams in the first summand when $\epsilon_r=0$, and in the second
summand when $\epsilon_r=1$. Further, it is easy to check that all terms
in both summands are obtained in this way, and that the
corresponding term is $(-1)^{\sum {\epsilon_i (c_i-1)}}$.
\end{proof}

\end{document}